\theoremstyle{plain}
\newtheorem{theorem}{Theorem}[section]
\newtheorem{proposition}[theorem]{Proposition}
\newtheorem{corollary}[theorem]{Corollary}
 \theoremstyle{definition}
\newtheorem{thm*}[]{Theorem}
\newcommand{\lie}[1]{\mathfrak{#1}}
\newcommand{\R}{\mathbb{R}} 
\newcommand{\inv}{^{-1}}
\newcommand{\N}{\mathbb{N}} 
\newcommand{\mx}{\mathfrak{X}} 
\newcommand{\ldr}[1]{{{\pounds}}_{#1}}
\newcommand{\an}[1]{\arrowvert_{#1}}
\DeclareMathOperator{\pr}{pr}
\DeclareMathOperator {\id}    {id}
\begin{document}
\title{On the flows of linear vector fields}


\author{M. Jotz} \address{Institut f\"ur Mathematik,
Julius-Maximilians-Universit\"at W\"urzburg, Germany}
\email{madeleine.jotz@uni-wuerzburg.de}
\subjclass[2010]{Primary: 53C99. 
}

\begin{abstract}
This note provides a detailed proof of the fact that a linear vector field on a vector bundle has a flow by vector bundle isomorphisms. 
It implies then easily the existence of global solutions to linear non-autonomous ODE's, with a standard time-dependent flow construction. As a further application, a simple proof of the smooth triviality of vector bundles over contractible bases is given. Finally, a detailed elementary proof of the isomorphy (as Lie algebras) of all fibers of the kernel of a transitive Lie algebroid is given.
 \end{abstract}
\maketitle

\tableofcontents

\section{Introduction}
This note shows in detail that the flow  of a linear vector field on a vector bundle is a flow by isomorphisms of the vector bundle.
This fact is standard, see \cite[Proposition 3.4.2]{Mackenzie05},
but a precise statement like the following (see Theorem \ref{lemma_linear_flow} below) and a detailed proof are not easy to find in the literature.  
\begin{thm*}\label{lemma_linear_flow_intro}
Let $q\colon E\to M$ be a smooth vector bundle and let 
$\chi\in\mx^l(E)$ be a linear vector field 
over a vector field $X\in\mx(M)$. Let $\phi\colon \Omega\to M$ be the flow of $X$ on its open flow domain $\Omega\subseteq \R\times M$ containing $\{0\}\times M$. Then the flow $\Phi$ of $\chi$
is defined on 
\[ \widetilde\Omega:=\left\{ (t, e)\in \R\times E \mid (t,q(e))\in \Omega
\right\}
\]
and for each $t\in\R$, the map
\[ \Phi_t\colon q\inv(M_t)\to q\inv(M_{-t})
\]
is an isomorphism of vector bundles over the diffeomorphism $\phi_t\colon M_t\to M_{-t}$, where 
$M_t\subseteq M$ is the open subset
\[ M_t:=\{x\in M\mid (t,x)\in\Omega\}.
\]
\end{thm*}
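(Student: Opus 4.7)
My plan is to split the theorem into two assertions---that $\dom\Phi=\widetilde\Omega$, and that each $\Phi_t$ is a vector bundle isomorphism over $\phi_t$---and to reduce both to standard ODE facts using the linearity of $\chi$. The essential local fact is that in any trivialization $E\an{U}\cong U\times\R^r$, linearity forces the vertical part of $\chi$ to depend linearly on the fiber coordinate: $\chi(x,v)$ has horizontal component $X(x)$ and vertical component $A(x)v$ for some smooth $A\colon U\to\mathrm{End}(\R^r)$. Hence the integral curves of $\chi$ lying above a fixed integral curve $x(s)$ of $X$ satisfy a non-autonomous \emph{linear} ODE $\dot v=A(x(s))v$ in the fiber.

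For the flow domain, the inclusion $\dom\Phi\subseteq\widetilde\Omega$ is immediate from $q$-relatedness: $q\circ\Phi_t=\phi_t\circ q$. For the reverse, fix $e\in E$ with $x:=q(e)$, let $I:=\{t\in\R\mid(t,x)\in\Omega\}$---an open interval about $0$---and let $J\subseteq I$ be the maximal interval about $0$ on which $\Phi_\cdot(e)$ is defined. Supposing $t_0:=\sup J<\sup I$, the orbit segment $\{\phi_s(x)\mid s\in[0,t_0]\}$ is compact; covering it by finitely many trivializing charts, the fiber coordinate of the integral curve satisfies on each chart a linear ODE with bounded coefficients, and chaining Gr\"onwall estimates through the bounded transition functions shows that $\Phi_s(e)$ stays bounded up to $s=t_0$. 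Standard ODE theory then extends the solution past $t_0$, contradicting the maximality of $J$; negative times are handled analogously.

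For the linearity of $\Phi_t$ I would apply the uniqueness of integral curves. Let $m_\lambda\colon E\to E$ denote scalar multiplication by $\lambda\in\R$ and $\mathrm{add}\colon E\times_M E\to E$ fiberwise addition; both are vector bundle morphisms over $\mathrm{id}_M$. By functoriality the tangent maps $Tm_\lambda$ and $T\,\mathrm{add}$ realize precisely the scalar multiplication and addition of the vector bundle structure $TE\to TM$, and the linearity of $\chi\colon E\to TE$ with respect to that structure translates into the intertwining identities $Tm_\lambda\circ\chi=\chi\circ m_\lambda$ and $T\,\mathrm{add}\circ(\chi,\chi)=\chi\circ\mathrm{add}$. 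Consequently $s\mapsto\lambda\Phi_s(e)$ and $s\mapsto\Phi_s(e_1)+\Phi_s(e_2)$ (for $e_1,e_2$ in a common fiber) are integral curves of $\chi$ through $\lambda e$ and $e_1+e_2$ respectively; uniqueness then yields $\Phi_t(\lambda e)=\lambda\Phi_t(e)$ and $\Phi_t(e_1+e_2)=\Phi_t(e_1)+\Phi_t(e_2)$. Combined with the first step, this makes $\Phi_t\colon q\inv(M_t)\to q\inv(M_{-t})$ a fiberwise-linear diffeomorphism over $\phi_t$, hence a vector bundle isomorphism.

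The main obstacle is the Gr\"onwall step: the non-explosion of the fiber component is exactly what the linearity of $\chi$ buys, since an arbitrary vector field $q$-related to $X$ could blow up in finite time even when the base flow does not. Once that is secured, the vector bundle morphism property follows cleanly from ODE uniqueness via the functorial identities above.
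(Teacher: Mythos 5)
Your proof is correct, and your linearity step --- integrating the $+$- and $m_\lambda$-relatedness of $\chi$ to the identities $+\circ(\Phi_t,\Phi_t)=\Phi_t\circ+$ and $m_\lambda\circ\Phi_t=\Phi_t\circ m_\lambda$ via uniqueness of integral curves --- is essentially the same as the paper's. The genuinely different part is the flow-domain argument. You work in local trivializations, note that linearity forces the fiber component of an integral curve over a base flow line $x(s)$ to solve the non-autonomous linear ODE $\dot v=A(x(s))v$, and exclude finite-time escape over the compact orbit segment by Gr\"onwall estimates plus the escape lemma; this is in effect a direct proof of global solvability of linear non-autonomous ODEs transplanted to the bundle, i.e.\ exactly the route the paper deliberately avoids (the introduction points out that Theorem \ref{lemma_linear_flow} can be deduced from Corollary \ref{lemma_linear_nODE}, ``but with more work''). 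The paper instead argues globally and without charts: the zero section carries integral curves of $X$ to integral curves of $\chi$, so $(t,0^E_x)$ lies in the flow domain of $\chi$ whenever $(t,x)\in\Omega$; openness of the flow domain then places an entire basis $(e^1,\dots,e^k)$ of $E_x$ (a small rescaling of one) among the initial conditions whose flow lines reach time $t$; and linearity of $\chi$ makes the pointwise linear combination $\sum_i\alpha_i c^i(s)$ of these flow lines an integral curve through an arbitrary $e=\sum_i\alpha_i e^i$, defined at time $t$. Your route buys a self-contained quantitative estimate and yields Corollary \ref{lemma_linear_nODE} as a by-product; the paper's route buys brevity, avoids coordinates and the Gr\"onwall chaining, and keeps that corollary as a genuine consequence rather than an ingredient. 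If you keep your version, make two points explicit: the subdivision of $[0,t_0]$ into finitely many subintervals each mapping into a single trivializing chart (Lebesgue number lemma), and the fact that the Gr\"onwall bound is an a priori estimate on the maximal existence interval, to be combined with the escape lemma to get the contradiction.
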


This paper, which is extracted from lecture notes by the author, discusses this theorem and its proof for future reference, in particular because it is needed in \cite{JoMa24} for the proof of the homotopy invariance of twisted Lie algebroid cohomology. While Theorem \ref{lemma_linear_flow_intro} can be deduced from the existence of global solutions to linear autonomous ordinary differential equations, this paper proves it only using standard results on flows of vector fields, and then deduce from it the existence of global solutions to linear autonomous ODE's, with a linear time-dependent vector field construction.

The smooth triviality of vector bundles over contractible bases -- a folklore fact that is also hard to find in the literature in the smooth setting -- follows in a straightforward manner from Theorem 
\ref{lemma_linear_flow_intro}, see Corollary \ref{cor_standard_vb_over_int} below and the discussion following it. 

The linear flows of the duals of linear derivations and of tensor products of derivations are given as well. Finally, an elementary proof 
of the isomorphy (as Lie algebras) of all fibers of the kernel of a transitive Lie algebroid is given, using the latter results on linear flows of tensor products.

\subsection*{Notation and conventions}

  All manifolds and vector bundles in this note are smooth and real.
  Vector bundle projections are written $q_E\colon E\to M$ if not stated otherwise, except for  $p_M\colon TM\to M$, which stands for the projection of the tangent bundle of a smooth manifold $M$.  Given a section
  $\varepsilon$ of $E^*$, the map $\ell_\varepsilon\colon E\to \R$ is
  the linear function associated to it, i.e.~the function defined by
  $e_x\mapsto \langle \varepsilon(x), e_x\rangle$ for all $e_x\in E\arrowvert_{x}$, $x\in M$.
  The set of global sections of a vector bundle $E\to M$ is denoted by
  $\Gamma(E)$ and $\mx(M)=\Gamma(TM)$ is the space of smooth vector fields on a
  smooth manifold $M$.

\subsection*{Acknowledgement}
The author thanks Rosa Marchesini for their stimulating collaboration on \cite{JoMa24}, which led her as well to the proof of Corollary \ref{cor_standard_vb_over_int}, and for pointing out to her all the different references for Theorem \ref{kernel_lab}.

\section{Linear derivations versus linear vector fields on vector bundles}
This section explains in detail the tangent prolongation of a vector bundle, defines linear vector fields on vector bundles and describes their equivalence with linear derivations.
\subsection{The tangent prolongation of a smooth vector bundle}
Consider a vector bundle $q:=q_E\colon E\to M$ with addition $+\colon E\times_ME\to E$. Then the tangent bundle of $E$ is a vector bundle $TE\to E$, since $E$ is a smooth manifold. 
Consider the smooth map $Tq\colon TE\to TM$. Take an open subset $U\subseteq M$ such that $U$ is a chart domain for a chart $\varphi$ of  $M$ and a trivialising set for $E\to M$. Take a smooth frame $(e_1,\ldots, e_k)$ of $E\an U=q^{-1}(U)$ and build the dual frame $(\sigma_1,\ldots, \sigma_k)$ of $E^*\an{U}$.
Consider the smooth chart $\tilde\varphi\colon q\inv(U)\to \varphi(U)\times \R^k$ of $E$ defined by 
\[ e 
\quad \mapsto\quad 
\left(\varphi(q(x)), \ell_{\sigma_1}(e), \ldots,  \ell_{\sigma_k}(e)\right)
\]
and the induced coordinate functions $q^*\varphi_1, \ldots, q^*\varphi_n, \ell_{\sigma_1}, \ldots, \ell_{\sigma_k}$ on $E\an{U}$.
Then 
\[\begin{tikzcd}
	{q^{-1}(U)} & {\varphi(U)\times\mathbb R^k} \\
	U & {\varphi(U)}
	\arrow["{\tilde\varphi}", from=1-1, to=1-2]
	\arrow["q"', from=1-1, to=2-1]
	\arrow["{\pr_{\varphi(U)}}", from=1-2, to=2-2]
	\arrow["\varphi"', from=2-1, to=2-2]
\end{tikzcd}\]
and so 
\[\begin{tikzcd}
	{T(q^{-1}(U))=(Tq)^{-1}(TU)} && {T(\varphi(U)\times\mathbb R^k)=\varphi(U)\times\mathbb R^n\times\mathbb R^k\times\mathbb R^k} \\
	TU && {T(\varphi(U))=\varphi(U)\times\mathbb R^n}
	\arrow["{T\tilde\varphi}", from=1-1, to=1-3]
	\arrow["Tq"', from=1-1, to=2-1]
	\arrow["{\pr_{T(\varphi(U))}=T\pr_{\varphi(U)}}", from=1-3, to=2-3]
	\arrow["{T\varphi}"', from=2-1, to=2-3]
\end{tikzcd}\]
which shows that 
\[ (T\varphi\circ Tq\circ T\tilde \varphi\inv)(x,u,v,w)=\pr_{T(\varphi(U))}(x,u,v,w)=(x,u)
\]
for all $(x,u,v,w)\in T(\varphi(U))\times \R^k\times \R^k$. Hence $Tq$ is a surjective submersion, since in adapted charts, it is just a projection on the first $2n$ coordinates.
Furthermore, each fiber of $Tq$ is equipped with the structure of a vector space of dimension $2k$. 

For each $T\varphi\inv(x,u)=:u_x\in T_{\varphi\inv(x)}M$, the map
\[ T+\colon (Tq)\inv(u_x)\times (Tq)\inv(u_x)\to (Tq)\inv(u_x)
\]
sends $T(\tilde\varphi\inv)(x,u,v,w)$ and $T(\tilde\varphi\inv)(x,u,v',w')$ to
\begin{equation*}
\begin{split}
& T_{(\tilde\varphi\inv(x,v), \tilde\varphi\inv(x,v')}+\left(T(\tilde\varphi\inv)(x,u,v,w), T(\tilde\varphi\inv)(x,u,v',w')\right)\\
&=\left.\frac{d}{dt}\right\an{t=0}\tilde\varphi\inv(x+tu, v+tw)+\tilde\varphi\inv(x+tu, v'+tw')\\
&=\left.\frac{d}{dt}\right\an{t=0}\tilde\varphi\inv(x+tu,  v+v'+t(w+w'))\\
&=T\tilde\varphi\inv(x,u,v+v',w+w').
\end{split}
\end{equation*}
This shows that the canonical addition in the fibers of $Tq$ given by the chart above coincides with $T\!+$ restricted to these fibers, and the trivialisation $\phi\colon q\inv(U)\to U\times \R^k$, $e\mapsto (q(e), \ell_{\sigma_1}(e), \ldots, \ell_{\sigma_k}(e))$, gives further a trivialisation
\[ T\phi\colon T(q\inv(U))\overset{\sim}{\longrightarrow} TU\times \R^k\times \R^k,
\]
which is an isomorphism of vector spaces in each fiber of $Tq$.
This shows that $Tq\colon TE\to TM$ is a smooth vector bundle of rank twice the rank of $E$. The space $TE$ fits so in  a square of vector bundles 
\[\begin{tikzcd}
	{TE} & {E} \\
	{TM} & {M}
	\arrow["{Tq}"', from=1-1, to=2-1]
	\arrow["p_E", from=1-1, to=1-2]
	\arrow["{q}", from=1-2, to=2-2]
	\arrow["{p_M}", from=2-1, to=2-2]
\end{tikzcd}.\]
Take $x\in M$ and $e$ and $e'\in E_x$. Take further $v_e, w_e\in T_eE$ and $u_{e'}, y_{e'}\in T_{e'}E$ such that 
\[ Tqv_e=Tqu_{e'}=:\alpha_x\in T_xM \quad \text{ and } \quad Tqw_e=Tqy_{e'}=:\beta_x\in T_xM.
\]
Then 
$Tq(v_e+_Ew_e)=\alpha_x+\beta_x=Tq(u_{e'}+_Ey_{e'})$
and\footnote{The tangent map $T+\colon T(E\times_M E)\to TE$ is, as always, a vector bundle homomorphism over $+\colon E\times_ME\to E$.}
\begin{equation*}
\begin{split}
&T_{(e,e')}+(v_{e}+_Ew_{e}, u_{e'}+_Ey_{e'})=T_{(e,e')}+\left((v_e, u_{e'})+_{E\times_M E}(w_e,y_{e'})\right)\\
&=\,\left(T_{(e,e')}+(v_e,u_{e'})\right)+_E\left(T_{(e,e')}+(w_e,y_{e'})\right)
\end{split}
\end{equation*}
which is written for simplicity, using the notation $T+=:+_{TM}$, 
\[(v_{e}+_Ew_{e})+_{TM}(u_{e'}+_Ey_{e'})=(v_e+_{TM}u_{e'})+_E(w_e+_{TM}y_{e'}),
\]
$+_E$ being the addition in the fibers of $TE\to E$, $+_{E\times_ME}$ the addition in the fibers of $TE\times_{TM}TE\to E\times_ME$ and $+_{TM}=T+$ being the addition in the fibers of $TE\to TM$.
Hence the two additions of the two vector bundle structures on $TE$ ``commute'', and $TE$ is a \textbf{double vector bundle} \cite{Pradines77,Mackenzie05}. All structure maps of one of the vector bundle structure are vector bundle morphisms with respect to the other one.

\bigskip

Consider the \emph{vertical space} $T^qE=\ker(Tq\colon TE\to TM) \subseteq TE$. By definition, it is a vector subbundle of $TE\to E$ of rank $k$. The intersection 
\[ T^qE\cap p_E\inv(0^E)=(Tq)\inv (0^{TM})\cap p_E\inv(0^E)
\]
is the space of vectors $v_{0^E_x}\in T_{0^E_x}E$ for $x\in M$ such that 
$T_{0^E_x}q(v_{0^E_x})=0_x\in T_xM$. Hence it is the restriction of $T^qE$ to the zero section of $E$. This intersection is the \emph{core} of the double vector bundle $E$ and it is seen as a vector bundle over $M$  \cite{Pradines77}.
In general, the core of a double vector bundle always inherits a vector bundle structure over the \textbf{double base} $M$ \cite{Pradines77}.
Here, the core of $TE$ can be identified as follows with the vector bundle $E\to M$.
The map \[\bar{}\,\colon E\to p_E^{-1}(0^E)\cap
(Tq)^{-1}(0^{TM})\]
 sending \[e\in E_x \quad \text{ to } \quad \bar
e=\left.\frac{d}{dt}\right\an{t=0}te\in T_{0^E_x}E\]
is a bijection and a linear map with respect to both additions on $TE$
since for $e,e'$ in the same fiber of $E$, 
\[ \left.\frac{d}{dt}\right\an{t=0}te+_E\left.\frac{d}{dt}\right\an{t=0}te'=\left.\frac{d}{dt}\right\an{t=0}t(e+e')=\left.\frac{d}{dt}\right\an{t=0}te+_{TM}\left.\frac{d}{dt}\right\an{t=0}te'.
\] In fact, the whole vector bundle $T^qE$ is isomorphic as a vector bundle over $E$ to the pullback $q^!E\to E$, via the morphism sending 
\[ (e,e') \in q^!E \quad \text{ to } \quad \left.\frac{d}{dt}\right\an{t=0} e+te',
\]
This isomorphism is easily seen in local coordinates, or in a local trivialisation of $E$. 
As a consequence of the isomorphism above, each section $e$ of $E\to M$ defines a ``pullback'' section $ q^!e=:e^\uparrow$ of $T^qE\to E$ via 
\[ e^\uparrow (e')=\left.\frac{d}{dt}\right\an{t=0} e'+t e_{q(e')},
\]
and since $T^qE\simeq q^!E$, these core sections span pointwise $T^qE$. The (global) flow of such a \textbf{core vector field} is simply 
\[ \R\times E\to E, \qquad (t, e')\mapsto e'+te(q(e')).
\]
The flows of two such core vector fields commute, and so two core vector fields  always have a vanishing Lie bracket.

\subsection{Linear vector fields and linear derivations}

A vector field $\chi\colon E\to TE$ is called \textbf{linear} if it is a morphism 
\[\begin{tikzcd}
	{E} & {TE} \\
	{M} & {TM}
	\arrow["{q}"', from=1-1, to=2-1]
	\arrow["\chi", from=1-1, to=1-2]
	\arrow["{Tq}", from=1-2, to=2-2]
	\arrow["X", from=2-1, to=2-2]
\end{tikzcd}\]
of vector bundles over a vector field $X\in\mx(M)$. 
In particular, a linear vector field $\chi\in\mx^l(E)$ over the zero vector field on $E$ has by definition values in the vertical space $T^qE$. Hence there is a smooth map $F\colon E\to E$ such that $q\circ F=q$ and  
\[ \chi(e)=\left.\frac{d}{dt}\right\an{t=0} e+tF(e)
\]
for all $e\in E$.
The linearity of $\chi$ then forces $F$ to be a section of $\operatorname{End}(E)$:
\[
\left.\frac{d}{dt}\right\an{t=0} e_1+e_2+tF(e_1+e_2)=\chi(e_1+e_2)=\chi(e_1)+_{TM}\chi(e_2)=\left.\frac{d}{dt}\right\an{t=0} e_1+e_2+tF(e_1)+tF(e_2)
\]
shows that $F(e_1+e_2)=F(e_1)+F(e_2)$ for $e_1,e_2$ in the same fiber of $E$, etc. Hence linear vector fields with values in $T^qE$, also called \textbf{core-linear vector fields}, are equivalent to sections of $\operatorname{End}(E)$, via the correspondence sending $\psi\in\Gamma(\operatorname{End}(E))$ to $\tilde \psi\in \mx^l(E)$, 
\[ \tilde\psi(e)=\left.\frac{d}{dt}\right\an{t=0} e+t\psi(e)
\]
for all $e\in E$.

\bigskip

Let $E\to M$ be a smooth vector bundle. 
A \textbf{linear derivation $D$ of $E$ with symbol $X\in\mx(M)$} is an $\R$-linear map
\[ D\colon \Gamma(E)\to \Gamma(E)
\]
with 
\[D(fe)=\ldr{X}(f)e+fD(e)
\]
for all  $f\in C^\infty(M)$ and $e\in\Gamma(E)$.

The space $\lie D(E)$ of derivations of $E$ is a $C^\infty(M)$-module and a real Lie algebra via the commutator: given two derivations  $D_1$ and $D_2$ of $E$ with symbols $X_1$ and $X_2$, respectively, 
the commutator
\[ [D_1, D_2]=D_1\circ D_2- D_2\circ D_1
\]
is again a derivation, with symbol $[X_1,X_2]$. Linear derivations of $E$ with symbol zero are exactly the sections of $\operatorname{End}(E)$.
This yields a short exact sequence of $C^\infty(M)$-modules 
\begin{equation}\label{ses}0\rightarrow \Gamma(\operatorname{End}(E))\rightarrow \lie D(E)\rightarrow \mx(M)\rightarrow 0.
\end{equation}
The map $\lie D(E)\rightarrow \mx(M)$ sends a derivation to its symbol. It is surjective since  a vector bundle always admits a linear connection and a linear connection ``lifts'' vector fields on $M$ to derivations of $E$.
The latter implies in fact that $\operatorname{Der}(E)\simeq \Gamma(\operatorname{End}(E))\oplus \mx(M)$ is the space of sections of a vector bundle over $M$. In the following, this vector bundle is written $\operatorname{Der}(E)$ and its space of sections, i.e.~the $C^\infty(M)$-module of derivations of $E$, is  $\Gamma(\operatorname{Der}(E))=\lie D(E)$.

Given a linear derivation $D$ of $E$ with symbol $X\in\mx(M)$, the \emph{dual derivation} $D^*\colon\Gamma(E^*)\to\Gamma(E^*)$ is defined by 
\[ \langle D^*\epsilon, e\rangle=X\langle \epsilon, e\rangle -\langle \epsilon, De\rangle
\]
for all $\epsilon\in\Gamma(E^*)$ and $e\in\Gamma(E)$. The symbol of $D^*$ remains the vector field $X\in\mx(M)$. 

\bigskip

\begin{theorem}[\cite{Mackenzie05}]
Let $q\colon E\to M$ be a smooth vector bundle.
A
linear vector field $\chi\in\mx^l(E)$ over a vector field $X\in\mx(M)$ defines a derivation $D_\chi\colon \Gamma(E) \to \Gamma(E)$ with symbol $X\in
\mx(M)$, via 
\begin{equation}\label{ableitungen}
\chi(\ell_{\varepsilon}) 
= \ell_{D_\chi^*(\varepsilon)} \,\,\,\, \text{ and }  \,\,\, \chi(q^*f)= q^*(X(f))
\end{equation}
for all $\varepsilon\in\Gamma(E^*)$ and $f\in C^\infty(M)$.
Conversely, given a derivation $D$ of $E$ with symbol $X$, then \eqref{ableitungen}
defines a linear vector field $\widehat D$.

This yields an isomorphism 
\[ \mx^l(E)\to \lie D(E), \qquad \chi\mapsto D_\chi
\]
of $C^\infty(M)$-modules, with inverse $D\mapsto \widehat D$.
\end{theorem}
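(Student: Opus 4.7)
The plan is to exploit the fact that, in any vector bundle chart on $E$, the pulled-back functions $q^*f$ and the fiber-linear functions $\ell_\varepsilon$ together have differentials that span $T^*E$ pointwise; a vector field on $E$ is therefore determined by its action on these two families. Accordingly, I would first establish a local characterisation of linearity: in a chart $q^{-1}(U)$ coming from a frame $(e_1,\dots,e_k)$ with dual $(\sigma_1,\dots,\sigma_k)$ and coordinates $(x^1,\dots,x^n,\xi^1,\dots,\xi^k)$ with $\xi^j=\ell_{\sigma_j}$, writing $\chi=\sum_l A^l\,\partial_{x^l}+\sum_j B^j\,\partial_{\xi^j}$, the morphism condition $Tq\circ\chi=X\circ q$ together with fiberwise linearity of $\chi$ with respect to $Tq\colon TE\to TM$ (the second VB-structure described in the preceding subsection) translates precisely into $A^l=q^*X^l$ and $B^j$ linear in $\xi$. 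Equivalently: $\chi(q^*f)=q^*(Xf)$ and $\chi(\ell_\varepsilon)$ is fiber-linear for every $\varepsilon\in\Gamma(E^*)$.

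For the forward direction, given a linear $\chi$ over $X$, the assignment $\varepsilon\mapsto\chi(\ell_\varepsilon)$ lands in the space of fiber-linear functions, which is in bijection with $\Gamma(E^*)$; this defines a unique $\R$-linear map $D_\chi^*\colon\Gamma(E^*)\to\Gamma(E^*)$ via $\ell_{D_\chi^*\varepsilon}=\chi(\ell_\varepsilon)$. Applying the Leibniz rule for $\chi$ to $\ell_{f\varepsilon}=q^*f\cdot\ell_\varepsilon$, together with $\chi(q^*f)=q^*(Xf)$, yields $D_\chi^*(f\varepsilon)=(Xf)\varepsilon+fD_\chi^*\varepsilon$, so $D_\chi^*$ is a derivation of $E^*$ with symbol $X$; its predual (in the sense of the paragraph preceding the theorem) is the sought derivation $D_\chi$ of $E$, which satisfies \eqref{ableitungen} by construction.

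For the reverse direction, given $D\in\lie D(E)$ with symbol $X$ and a frame $(e_i)$ over $U$ with $De_i=\sum_j D^j_ie_j$, I would set
\[
\widehat D\an{q^{-1}(U)}\;:=\;\sum_l (q^*X^l)\,\partial_{x^l}\;-\;\sum_{i,j}(q^*D^j_i)\,\xi^i\,\partial_{\xi^j},
\]
a smooth vector field that is linear in the sense above; a direct computation using $\langle D^*\sigma_j,e_i\rangle=-D^j_i$ shows $\widehat D(\ell_{\sigma_j})=\ell_{D^*\sigma_j}$ and $\widehat D(q^*f)=q^*(Xf)$ for $f\in C^\infty(U)$. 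Since the differentials $\{d(q^*f),d\ell_\varepsilon\}$ span $T^*E$ pointwise on $q^{-1}(U)$, any two vector fields on $q^{-1}(U)$ agreeing on $q^*C^\infty(U)\cup\{\ell_\varepsilon:\varepsilon\in\Gamma(E^*\an U)\}$ coincide; hence the local expressions on overlapping chart domains agree and patch into a global $\widehat D\in\mx^l(E)$ characterised by \eqref{ableitungen}.

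Finally, the two assignments are mutually inverse because each is pinned down by the same relations \eqref{ableitungen}, and $C^\infty(M)$-linearity is immediate from $(g\chi)(\ell_\varepsilon)=q^*g\cdot\ell_{D_\chi^*\varepsilon}=\ell_{gD_\chi^*\varepsilon}$ and $(g\chi)(q^*f)=q^*((gX)f)$, giving $D_{g\chi}=gD_\chi$. The only substantive obstacle is the patching in the third step: although $q^*C^\infty(M)$ and $\{\ell_\varepsilon\}$ do not algebraically generate $C^\infty(E)$, the pointwise spanning of $T^*E$ by their differentials is exactly what is needed to force uniqueness of a vector field with prescribed action on these two families, and hence the global well-definedness of $\widehat D$.
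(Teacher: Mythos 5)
Your proof is correct and follows the same skeleton as the paper's: extract $D_\chi^*$ from $\varepsilon\mapsto\chi(\ell_\varepsilon)$, verify the Leibniz rule from $\ell_{f\varepsilon}=q^*f\cdot\ell_\varepsilon$, and use that a vector field on $E$ is determined by its action on pullback and fiber-linear functions. The only real difference is one of technique: where the paper establishes fiberwise linearity of $\chi(\ell_\varepsilon)$ intrinsically, via $+$-relatedness of $(\chi,\chi)$ to $\chi$, and only asserts the existence of $\widehat D$ in the converse direction, you work in adapted charts, give the explicit local formula for $\widehat D$ and the patching argument -- a welcome amount of detail on precisely the step the paper leaves terse.
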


\begin{proof}
Since $\chi$ is linear over $X$, it is in particular $q$-related to $X$; i.e.~$Tq\circ \chi=X\circ q$.
Hence the equality $ \chi(q^*f)= q^*(X(f))$ holds for all $f\in C^\infty(M)$. The additivity of $\chi$ can be written 
\[ T\!\!+\circ(\chi, \chi)=\chi\circ +\colon E\times_M E\to TE
\]
for all $x\in M$. That is, $(\chi,\chi)\colon E\times_ME\to TE\times_{TM}TE=T(E\times_ME)$ is $+$-related to $\chi$.
This implies that $\chi(\ell_\epsilon)$ is linear for all $\epsilon\in\Gamma(E^*)$: The additivity follows from $\ell_\epsilon\circ +=+\circ (\ell_\epsilon, \ell_\epsilon)\colon E\times_M E\to \mathbb R$, by the linearity of $\ell_\epsilon$:
\begin{equation*}
\begin{split}
(\chi(\ell_\epsilon))\circ +=+^*(\chi(\ell_\epsilon))=(\chi, \chi)(+^*\ell_\epsilon)=(\chi,\chi)(+\circ (\ell_\epsilon, \ell_\epsilon))=+\circ (\chi(\ell_\epsilon), \chi(\ell_\epsilon)).
\end{split}
\end{equation*}
The $\mathbb R$-homogeneity follows as usual with an algebraic induction, or similarly with the $\mathbb R$-homogeneity of $\chi$.

Write then  $D^*_\chi\epsilon$ for the section of $E^*$ defined by $\chi(\ell_\epsilon)=\ell_{D^*_\chi\epsilon}$.
First check that \eqref{ableitungen} indeed defines a derivation $D_\chi^*$ of $E^*$ with symbol $X$.
Choose $f\in C^\infty(M)$ and $\epsilon\in\Gamma(E^*)$.
Then 
\[ \chi(\ell_{f\cdot\epsilon})=\chi(q^*f\cdot \ell_\epsilon)=\chi(q^* f)\cdot \ell_\epsilon+q^*f\cdot \chi(\ell_\epsilon)=q^*(X(f))\cdot \ell_\epsilon+q^*f\cdot \ell_{D^*_\chi\epsilon}=
\ell_{X(f)\epsilon+fD^*_\chi\epsilon}.
\]
Hence $D^*_\chi(f\epsilon)=X(f)\epsilon+fD^*_\chi\epsilon$, which shows that $D^*_\chi$ is a derivation of $E^*$.

Conversely, since a vector field on $E$ is completely determined by its values on $C^\infty_{\rm lin}(E)$ and on $q^*C^\infty(M)$ because $E$ has a smooth atlas with pullback and linear functions only, and since the two conditions in \eqref{ableitungen} are compatible, a derivation $D$ of $E$ with symbol $X\in\mx(M)$ defines a linear vector field $\widehat D$. The isomorphism of $\mx^l(E)$ with $\lie D(E)$ as $C^\infty(M)$-modules is then immediate.
\end{proof}

Given a derivation $D$ over $X\in\mx(M)$, the explicit formula for $\widehat D\in\mx^l(E)$ is 
\begin{equation}\label{explicit_hat_D}
\widehat D(e_m)=T_me(X(m))+_E\left.\frac{d}{dt}\right\an{t=0}(e_m-tD(e)(m))
\end{equation}
for $e_m\in E$ and any $e\in\Gamma(E)$ such that $e(m)=e_m$. This can be checked by explicit computations on linear and pullback functions on $E$:
on $q^*f$ for $f\in C^\infty(M)$
\begin{equation*}
\begin{split}
\left(T_me(X(m))+_E\left.\frac{d}{dt}\right\an{t=0}(e_m-tD(e)(m))\right)(q^*f)=X(m)(f)=\left(\widehat D(e_m)\right)(q^*f),
\end{split}
\end{equation*}
and on $\ell_\epsilon$ for $\epsilon \in\Gamma(E^*)$
\begin{equation*}
\begin{split}
&\left(T_me(X(m))+_E\left.\frac{d}{dt}\right\an{t=0}(e_m-tD(e)(m))\right)(\ell_\epsilon)\\
&=T_m\langle\epsilon,e\rangle(X(m))+\left.\frac{d}{dt}\right\an{t=0}(\langle \epsilon(m), e_m\rangle -t\langle \epsilon(m), D(e)(m)\rangle)\\
&=X(m)\langle\epsilon,e\rangle-\langle \epsilon(m), D(e)(m)\rangle=\langle (D^*\epsilon)(m), e_m\rangle=\ell_{D^*\epsilon}(e_m)=\left(\widehat D(e_m)\right)(\ell_\epsilon).
\end{split}
\end{equation*}
Since $E$ has a smooth atlas with linear and pullback coordinate functions, this shows \eqref{explicit_hat_D}.

As a consequence, the following proposition can be proved, see \cite{Mackenzie05}.
\begin{proposition}
Let $q\colon E\to M$ be a smooth vector bundle and choose derivations $D$, $D_1$ and $D_2$ of $E$ with symbols $X$, $X_1$ and $X_2$, respectively,
as well as sections $e, e_1, e_2\in\Gamma(E)$.
Then 
\begin{equation}\label{brackets}
\begin{split}
\left[\widehat{D_1}, \widehat{D_2}\right]&=\widehat{[D_1,D_2]},\\
\left[\widehat{D}, e^\uparrow\right]=(De)^\uparrow &\text{ and } \left[e_1^\uparrow,e_2^\uparrow\right]=0.
\end{split}
\end{equation}
\end{proposition}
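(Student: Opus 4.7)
The plan is to prove all three identities by evaluating both sides on the two types of functions that generate the algebra of smooth functions on $E$: the linear functions $\ell_\epsilon$ (for $\epsilon\in\Gamma(E^*)$) and the pullback functions $q^*f$ (for $f\in C^\infty(M)$). Since a vector field on $E$ is completely determined by its action on these two classes of functions (as was already used in the theorem just above), matching the two sides on each class suffices.

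First I would record the action of a core vector field $e^\uparrow$ on generating functions. A direct computation from the definition $e^\uparrow(e')=\left.\frac{d}{dt}\right\an{t=0}(e'+te(q(e')))$ gives
\[
e^\uparrow(q^*f)=0 \qquad \text{and} \qquad e^\uparrow(\ell_\epsilon)=q^*\langle\epsilon,e\rangle,
\]
and from the theorem I already have $\widehat D(q^*f)=q^*(X f)$ and $\widehat D(\ell_\epsilon)=\ell_{D^*\epsilon}$.

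For the third identity, both $e_1^\uparrow(e_2^\uparrow(\ell_\epsilon))=e_1^\uparrow(q^*\langle\epsilon,e_2\rangle)=0$ and the symmetric term vanish, and similarly on pullback functions, so $[e_1^\uparrow,e_2^\uparrow]=0$ (this also follows from the already-noted fact that the flows of core vector fields commute). For the second identity, I would compute
\[
[\widehat D,e^\uparrow](q^*f)=-e^\uparrow(q^*(Xf))=0=(De)^\uparrow(q^*f)
\]
and
\[
[\widehat D,e^\uparrow](\ell_\epsilon)=\widehat D(q^*\langle\epsilon,e\rangle)-e^\uparrow(\ell_{D^*\epsilon})=q^*\bigl(X\langle\epsilon,e\rangle-\langle D^*\epsilon,e\rangle\bigr)=q^*\langle\epsilon,De\rangle=(De)^\uparrow(\ell_\epsilon),
\]
where the next-to-last equality uses the definition of $D^*$.

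The first identity is the one that needs the most care. Evaluating on pullback functions gives $[\widehat{D_1},\widehat{D_2}](q^*f)=q^*([X_1,X_2]f)=\widehat{[D_1,D_2]}(q^*f)$ immediately. On linear functions,
\[
[\widehat{D_1},\widehat{D_2}](\ell_\epsilon)=\ell_{D_1^*D_2^*\epsilon-D_2^*D_1^*\epsilon}=\ell_{[D_1^*,D_2^*]\epsilon},
\]
so the main step is the algebraic identity $[D_1^*,D_2^*]=[D_1,D_2]^*$, which I would verify by expanding $\langle D_1^*D_2^*\epsilon,e\rangle$ and $\langle D_2^*D_1^*\epsilon,e\rangle$ using the definition of the dual derivation twice and cancelling the mixed terms; the surviving terms give exactly $[X_1,X_2]\langle\epsilon,e\rangle-\langle\epsilon,[D_1,D_2]e\rangle$. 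This commutator-with-dual lemma is the only non-formal point in the argument and is where I would expect a careless sign or ordering mistake, so I would do that calculation explicitly.
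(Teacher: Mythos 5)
Your proposal is correct and follows essentially the same route as the paper's proof: evaluate both sides of each identity on pullback functions $q^*f$ and linear functions $\ell_\epsilon$, using $e^\uparrow(q^*f)=0$, $e^\uparrow(\ell_\epsilon)=q^*\langle\epsilon,e\rangle$, $\widehat D(q^*f)=q^*(X(f))$ and $\widehat D(\ell_\epsilon)=\ell_{D^*\epsilon}$. The only (harmless) difference is that you spell out the verification of $[D_1^*,D_2^*]=[D_1,D_2]^*$, which the paper uses without comment, and your description of that expansion is accurate.
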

The first equality shows that the map $\lie D(E)\to \mx^l(E)$, $D\mapsto\widehat{D}$ is an isomorphism of \emph{Lie algebras}.

\begin{proof}[Proof of Proposition \ref{brackets}]
The third equation was already proved. For the second one, compute for $f\in C^\infty(M)$ and $\epsilon\in\Gamma(E^*)$:
\[\left[\widehat{D}, e^\uparrow\right](q^* f)=\widehat{D}(0)-e^\uparrow(q^*(X(f))=0=(De)^\uparrow(q^*f)
\]
and 
\begin{equation*}
\begin{split}\left[\widehat{D}, e^\uparrow\right](\ell_\epsilon)&=\widehat{D}(q^*<\epsilon, e>)-e^\uparrow(\ell_{D^*\epsilon})=q^*(X<\epsilon, e>-<D^*\epsilon, e>)\\
&=q^*<\epsilon, De>=(De)^\uparrow(\ell_\epsilon).
\end{split}
\end{equation*}
Similarly compute for the first equation
\[\left[\widehat{D_1}, \widehat{D_2}\right](q^* f)=\widehat{D_1}(q^*(X_2(f)))-\widehat{D_2}(q^*(X_1(f))=q^*([X_1,X_2](f))=\widehat{\left[D_1, D_2\right]}(q^*f)
\]
and 
\begin{equation*}
\begin{split}
\left[\widehat{D_1}, \widehat{D_2}\right](\ell_\epsilon)&=\widehat{D_1}(\ell_{D_2^*\epsilon})-\widehat{D_2}(\ell_{D_1 ^*\epsilon})=\ell_{(D_1^*D_2^*-D_2^*D_1^*)\epsilon}
\\
&=\ell_{[D_1,D_2]^*\epsilon}=\widehat{[D_1,D_2]}(\ell_\epsilon).\qedhere
\end{split}
\end{equation*}
\end{proof}

\bigskip

\section{On the flow of a linear vector field}\label{Section_linear_flows}

This section proves the main theorem of this note.
\begin{theorem}\label{lemma_linear_flow}
Let $E\to M$ be a smooth vector bundle and let $\chi\in\mx^l(E)$ be a linear vector field on $E$ over $X\in\mx(M)$. Let $\phi\colon \Omega\to M$ be the flow of $X$, with, as usual, $\Omega\subseteq \R\times M$ an open subset containing $\{0\}\times M$. Then the flow $\Phi$ of $\chi\in\mx^l(E)$ is defined on 
\[ \widetilde\Omega:=\left\{ (t, e)\in \R\times E \mid (t,q(e))\in \Omega
\right\}
\]
and for each $t\in\R$, the map
\[ \Phi_t\colon q\inv(M_t)\to q\inv(M_{-t})
\]
is an isomorphism of vector bundles over the diffeomorphism $\phi_t\colon M_t\to M_{-t}$, where 
$M_t\subseteq M$ is the open subset
\[ M_t:=\{x\in M\mid (t,x)\in\Omega\}.
\]
\end{theorem}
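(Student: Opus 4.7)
The plan is to combine three observations. First, $\chi$ is $q$-related to $X$, since $\chi\colon E\to TE$ being a VB morphism over $X$ forces $Tq\circ\chi=X\circ q$; this guarantees that the flow of $\chi$ projects onto the flow of $X$. Second, in any local VB trivialization of $E$ a linear vector field takes the coordinate form $\dot x=X(x)$, $\dot v=A(x)v$, so the fiber equation is a linear system that cannot blow up while the base curve stays in the chart on a bounded time interval. Third, the VB-morphism axioms for $\chi$ (compatibility with scalar multiplication and fiberwise addition) transfer to the flow $\Phi_t$ via uniqueness of integral curves.

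Let $\widetilde\Omega_0\subseteq\R\times E$ denote the maximal open flow domain of $\chi$. The standard principle for $q$-related vector fields shows that any maximal $\chi$-integral curve through $e$ projects under $q$ to the maximal $X$-integral curve through $q(e)$; this gives both $q\circ\Phi_t=\phi_t\circ q$ on $\widetilde\Omega_0$ and the inclusion $\widetilde\Omega_0\subseteq\widetilde\Omega$. The nontrivial direction $\widetilde\Omega\subseteq\widetilde\Omega_0$ is an escape-lemma argument. Fix $(t,e)\in\widetilde\Omega$ with $t>0$ (the case $t<0$ is symmetric) and let $T\in(0,t]$ be the supremum of times for which $\Phi_s(e)$ exists. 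The arc $\{\phi_s(q(e))\mid s\in[0,T]\}$ is compact in $M$, hence covered by finitely many trivializing charts of $E$; a Lebesgue-number argument partitions $[0,T]$ into finitely many subintervals, each mapping into a single chart. On each subinterval the fiber component of $\Phi_s(e)$ satisfies a linear ODE with continuous matrix coefficients, so Gronwall's inequality bounds its norm. Patching the estimates, $\Phi_s(e)$ stays in a compact subset of $E$ on $[0,T)$; the usual escape lemma then contradicts $T\leq t$ and forces $T>t$, i.e.\ $(t,e)\in\widetilde\Omega_0$.

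Once $\widetilde\Omega_0=\widetilde\Omega$ is established, smoothness of $\Phi$ on $\widetilde\Omega$ follows from smooth dependence on initial conditions. To upgrade each $\Phi_t$ to a VB morphism over $\phi_t$, I would use the VB-morphism axioms for $\chi$: fiberwise scalar multiplication and the addition $+\colon E\times_M E\to E$ relate $\chi$ to itself (the additive case being the identity $T(+)\circ(\chi,\chi)=\chi\circ +$ recalled in the excerpt). Uniqueness of integral curves then yields $\Phi_t(\lambda e)=\lambda\Phi_t(e)$ and $\Phi_t(e_1+e_2)=\Phi_t(e_1)+\Phi_t(e_2)$ whenever both sides are defined; combined with $\Phi_{-t}\circ\Phi_t=\id$ this makes $\Phi_t$ a VB isomorphism over the diffeomorphism $\phi_t\colon M_t\to M_{-t}$. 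The main obstacle is the escape-lemma step, which is where linearity of $\chi$ in the fibers is genuinely used; the rest of the argument is routine bookkeeping.
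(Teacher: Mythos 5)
Your proof is correct, but the key step --- showing that the flow domain of $\chi$ is all of $\widetilde\Omega$ --- is handled by a genuinely different argument than the paper's. You run the classical escape-lemma/Gronwall argument: cover the compact base arc by trivializing charts, observe that the fiber component solves a linear ODE with bounded coefficients on each piece, bound its norm, and conclude that the integral curve cannot escape to infinity before time $t$. The paper instead avoids all norm estimates: it first checks that $t\mapsto 0^E(\phi_t(x))$ is an integral curve of $\chi$ (so the zero section flows for as long as the base does), then uses \emph{openness of the flow domain} to find a whole neighbourhood of $(t,0^E_x)$ in the flow domain, which contains a (rescaled) basis $e^1,\dots,e^k$ of $E_x$; finally, fiberwise linearity of $\chi$ shows that $s\mapsto\sum_i\alpha_i c^i(s)$ is an integral curve through an arbitrary $e=\sum_i\alpha_i e^i$, defined up to time $t$. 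Both arguments are valid, and your treatment of the linearity of $\Phi_t$ (via $+$-relatedness of $(\chi,\chi)$ to $\chi$ and uniqueness of integral curves) coincides with the paper's. The trade-off: your route imports the quantitative global-existence theory of linear ODEs, whereas the paper deliberately uses only qualitative facts about flows (openness of the flow domain, uniqueness, smooth dependence) precisely so that it can then \emph{deduce} global existence for linear non-autonomous ODEs as Corollary \ref{lemma_linear_nODE}; with your argument that corollary would be circular in spirit, since Gronwall-type estimates are essentially its standard proof. Two small points of bookkeeping in your write-up: the "supremum of times for which $\Phi_s(e)$ exists" should be the right endpoint of the maximal existence interval (and you should note that $[0,t]\times\{q(e)\}\subseteq\Omega$ so the base arc is indeed defined and compact), and when patching the fiber-norm bounds across charts you need the transition functions to be bounded on the compact arc --- both are routine and do not affect correctness.
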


\begin{proof}[Proof of Theorem \ref{lemma_linear_flow}]
First, as usual $Tq\circ \chi=X\circ q$ implies that each flow line of $\chi$ projects under $q$ to a flow line of $X$, and so that 
\[ \widetilde \Omega\subseteq \{(t, e)\in \mathbb R\times E\mid (t, q(e))\in \Omega\}.
\]

Choose a point $x\in M$ and consider the maximal integral curve $c\colon J\to M$ of $X$ through $x$ at time $0$, i.e.~with an open interval $J\subseteq \mathbb R$ containing $0$. Define $C\colon J\to E$, $C(t)=0^E(c(t))$ for all $t\in J$. Then  for all $t\in J$
\[ \dot C(t)=T_{c(t)}0^E(\dot c(t))=T_{c(t)}0^E(X(c(t)))=\chi(0^E(c(t)))=\chi(C(t))
\]
since $\chi$ is linear and so in particular 
\[\begin{tikzcd}
	E & TE \\
	M & TM
	\arrow["\chi", from=1-1, to=1-2]
	\arrow["{0^E}", from=2-1, to=1-1]
	\arrow["X"', from=2-1, to=2-2]
	\arrow["{T0^E}"', shift right, from=2-2, to=1-2]
\end{tikzcd}\]
commutes. (The map $T0^E$ is the zero section of the vector bundle $Tq\colon TE\to TM$.) That is, $C\colon J\to E$ is an integral curve of $\chi$ starting at $0^E_x$ at time $0$.
This shows that if $(t,x)\in \Omega$ for $t\in\mathbb R$ and $x\in M$, then also $(t,0^E_x)\in\widetilde \Omega$, the flow domain of $\chi$.

Since $\widetilde \Omega$ is open in $\mathbb R\times E$, there exist then $I\subseteq \mathbb R$ open around $t$ and $U\subseteq E$ open around $0^E_x$ such that 
\[ (t, 0^E_x)\in I\times U\subseteq \widetilde \Omega.
\]
Since $U\subseteq E$ is open and $E$ is locally trivial, $U$ contains the elements of a basis $(e^1, \ldots, e^k)$ of $E_x$.
Then $(t,e^i)\in \widetilde \Omega$ for $i=1,\ldots, k$. Write 
\[ c^i\colon J_i\to E
\]
for the flow line of $\chi$ through $e^i$ at time $0$, hence with $0, t\in J_i$ for each $i=1,\ldots, k$. Then as already observed, the curves $q\circ c^i\colon J_i\to M$ are all integral curves of $X$ through $x$ at time $0$. They must hence coincide on the intersection
\[ I=\bigcap_{i=1}^k J_i,
\]
which is an open interval containing $0$ and $t$.
Take now an arbitrary $e\in E_x$ and write it in the basis $(e^1, \ldots, e^k)$:
\[ e=\sum_{i=1}^k \alpha_ie^i
\]
with $\alpha_1,\ldots, \alpha_k\in\mathbb R$.  
Consider the smooth curve 
\[ C\colon I\to E, \qquad s\mapsto \sum_{i=1}^k\alpha_i c^i(s).
\]
This is well-defined since the curves $c^i$ all have the same projection to $M$ on $I$.
Then $C(0)=e$ and 
\[ \dot C(s)=\alpha_1\cdot_{TM}\dot c^1(s)+_{TM}\ldots+_{TM}\alpha_k\cdot_{TM} \dot c^k(s)
\]
by definition of $\cdot_{TM}$ and $+_{TM}$.
This yields 
\[ \dot C(s)=\alpha_1\cdot_{TM}\chi(c^1(s))+_{TM}\ldots+_{TM}\alpha_k\cdot_{TM} \chi(c^k(s))=\chi\left( \sum_{i=1}^k\alpha_i c^i(s)
\right)=\chi(C(s))
\]
for all $s\in I$,
since $\chi$ is linear. This shows that $C\colon I\to E$ is an integral curve of $\chi$ through $e$ at time $0$ and which is defined at time $t$. 
Hence $(t,e)\in \widetilde \Omega$. The above shows that 
\[ \{(t, e)\in \mathbb R\times E\mid (t, q(e))\in \Omega\}\subseteq \widetilde \Omega.
\]
Hence the two sets are equal.

\medskip
For each $t\in \R$ and each $x\in M_t$ the flow $\Phi_t$ restricts to a map
\[ E\an{x}\to E\an{\phi_t(x)}
\]
which is bijective since it has as inverse the restriction of $\Phi_{-t}$ to $E\an{\phi_t(x)}$.
To see that $\Phi_t$ is linear, note that 
\[ T\!\!+\circ(\chi,\chi)\colon E\times_ME\to TE
\]
equals 
\[ \chi\circ +\colon E\times_ME\to TE.
\]
Hence the vector field $(\chi, \chi)$ on $E\times_M E$, which is an embedded submanifold of $E\times E$, is $+$-related to the vector field $\chi$ on $E$. Since the flow $(\chi, \chi)$ at a time $t$ is $(\Phi_t,\Phi_t)$, this integrates to the equality
\[ +\circ (\Phi_t,\Phi_t)=\Phi_t\circ +,
\]
which exactly restricts to the additivity of $\Phi_t\an{E\an x}$. Similarly, for each $\alpha\in \R$, the compatibility of $\chi$ with the scalar multiplication by $\alpha$ integrates to the $\mathbb R$-homogeneity of $\Phi_t\an{E\an x}$, as already seen in the considerations above.
\end{proof}

\bigskip

Theorem  \ref{lemma_linear_flow} has the following corollary, which is standard, see e.g.~\cite[IV,\textsection 1]{Lang99} or \cite[Theorem 4.31]{Lee18}.
In fact, both statements are equivalent, since Theorem  \ref{lemma_linear_flow} can also be deduced from Corollary \ref{lemma_linear_nODE}, but with more work than in the proof above.
\begin{corollary}\label{lemma_linear_nODE}
 Let $n\geq 1$.
Consider a smooth function $A\colon J\to \mathbb R^{n\times n}$ on an open interval $J=(\alpha, \beta)\subseteq \mathbb R$. Then for each $t_0\in J$ and each $v_0\in \mathbb R^n$ the nonautonomous linear ordinary differential equation 
\[ \dot x(t)=A(t)\cdot x(t), \qquad x(t_0)=v_0
\]
has a unique smooth solution $x\colon J\to \mathbb R^n$.
\end{corollary}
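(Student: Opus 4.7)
The plan is to recast the nonautonomous linear ODE as the integral-curve equation of a suitable linear vector field on a trivial vector bundle over $J$, and then invoke Theorem \ref{lemma_linear_flow} to obtain a global solution directly.

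First I would take the trivial rank-$n$ vector bundle $q\colon E:=J\times\mathbb{R}^n\to J$ together with the vector field $X=\partial_t\in\mx(J)$, whose flow is $\phi_s(\tau)=\tau+s$ on the open flow domain $\Omega=\{(s,\tau)\in\mathbb{R}\times J:\tau+s\in J\}$. Using the canonical identification of a tangent vector at $(\tau,v)\in E$ with a pair in $T_\tau J\times\mathbb{R}^n$, I would define
\[
\chi\colon E\to TE,\qquad \chi(\tau,v):=\bigl(\partial_t|_\tau,\,A(\tau)v\bigr).
\]
Then I would check that $\chi$ is a linear vector field over $X$: it is smooth because $A$ is, it satisfies $Tq\circ\chi=X\circ q$ by construction, and fiberwise over each $\tau\in J$ the assignment $v\mapsto\chi(\tau,v)$ is $\mathbb{R}$-linear into the fiber $(Tq)^{-1}(\partial_t|_\tau)$ of the vector bundle $Tq\colon TE\to TJ$ described in Section 2.

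With $\chi$ in hand, Theorem \ref{lemma_linear_flow} asserts that the flow $\Phi$ of $\chi$ is defined on $\widetilde\Omega=\{(s,(\tau,v))\in\mathbb{R}\times E:\tau+s\in J\}$. Given initial data $t_0\in J$ and $v_0\in\mathbb{R}^n$, the integral curve of $\chi$ starting at $(t_0,v_0)$ has the form $s\mapsto(t_0+s,y(s))$ with $y(0)=v_0$ and $\dot y(s)=A(t_0+s)y(s)$, and it is defined for every $s\in(\alpha-t_0,\beta-t_0)$. Setting $x(\tau):=y(\tau-t_0)$ produces a smooth solution $x\colon J\to\mathbb{R}^n$ to $\dot x(\tau)=A(\tau)x(\tau)$ with $x(t_0)=v_0$, defined on all of $J$. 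Uniqueness is inherited from the uniqueness of maximal integral curves of the vector field $\chi$.

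There is no genuine obstacle in this plan; the real content has been dispatched in Theorem \ref{lemma_linear_flow}. The only delicate point is the verification that $\chi$ is linear in the vector bundle sense --- namely as a morphism of vector bundles over the base morphism $X\colon M\to TM$ --- rather than merely fiberwise linear in the chart coordinates. Once one has unpacked the double vector bundle structure on $TE$ from Section 2 and identified the second $\mathbb{R}^n$ factor in the chart of $TE$ with the fiber of $Tq\colon TE\to TJ$, the linearity condition reduces to the observation that $v\mapsto A(\tau)v$ is $\mathbb{R}$-linear.
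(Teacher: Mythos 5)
Your proposal is correct and follows essentially the same route as the paper: the same linear vector field $(\tau,v)\mapsto(\partial_t|_\tau, A(\tau)v)$ on the trivial bundle $J\times\mathbb{R}^n\to J$ over $\partial_t$, the same appeal to Theorem \ref{lemma_linear_flow} for global existence of the flow over the flow domain of $\partial_t$, and the same time-shift reparametrisation $x(\tau)=y(\tau-t_0)$ that the paper carries out via $\gamma^{(t_0,v_0)}(s)=(\pr_{\mathbb{R}^n}\circ\,\Phi)(s-t_0,t_0,v_0)$. The only difference is one of detail: the paper writes out the fibrewise linearity check in the coordinates of $TE$ explicitly, whereas you defer it, but the content is identical.
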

\begin{proof}
Consider the smooth vector field $X\colon J\times \mathbb R^n\to T(J\times \mathbb R^n)$ defined by 
\[ X(s,v)=(s,1,v,A(s)(v))\in T_sJ\times T_v\mathbb R^n=\{s\}\times \R\times\{v\}\times \R^n
\]
for all $(s,v)\in J\times \R^n$. The vector field $X$ is linear over the vector field $\partial_s\colon J\to TJ=J\times \mathbb R$, $s\mapsto (s,1)$  on $J$. 
\[\begin{tikzcd}
	{J\times\R^n} & {T(J\times\R^n)} \\
	J & TJ
	\arrow["X", curve={height=-12pt}, from=1-1, to=1-2]
	\arrow["q"', from=1-1, to=2-1]
	\arrow["p", from=1-2, to=1-1]
	\arrow["Tq", from=1-2, to=2-2]
	\arrow["{\partial_t}"', curve={height=12pt}, from=2-1, to=2-2]
	\arrow["{p_J}"', from=2-2, to=2-1]
\end{tikzcd}\]
This is easy to see: for $v_1,v_2\in V$ and $\alpha_1,\alpha_2\in\mathbb R$
\begin{equation*}
\begin{split}
X(s, \alpha_1v_1+\alpha_2v_2)&=(s,1,\alpha_1v_1+\alpha_2v_2,A(s)(\alpha_1v_1+\alpha_2v_2))\\
&=(s,1,\alpha_1v_1+\alpha_2v_2,\alpha_1A(s)(v_1)+\alpha_2A(s)(v_2))\\
&=\alpha_1\cdot_{TJ}(s,1,v_1, A(s)(v_1))+_{TJ}\alpha_2\cdot_{TJ}(s,1,v_2,A(s)(v_2))\\
&=\alpha_1\cdot_{TJ}X(s,v_1)+_{TJ}\alpha_2\cdot_{TJ}X(s,v_2).
\end{split}
\end{equation*}

Since $\partial_s\in\mx(J)$ has the flow domain 
\[ \Omega=\{(t,s)\in\mathbb R\times J\mid t+s\in J\}=\bigcup_{s\in J} \left((\alpha-s, \beta-s)\times \{s\}\right)\subseteq \mathbb R\times J
\]
and the flow 
\[ \phi\colon \Omega\to J, \qquad (t,s)\mapsto t+s,
\]
by Lemma \ref{lemma_linear_flow}, the vector field $X$ has a maximal  flow
\[ \Phi\colon \Omega\times\R^n\to J\times \R^n,  
\]
such that for each $t\in \R$,
\[\begin{tikzcd}
	{(J\cap(\alpha-t, \beta-t))\times\mathbb R^n} && {J\times \R^n} \\
	{J\cap(\alpha-t,\beta-t)} && J
	\arrow["{\Phi_t}", from=1-1, to=1-3]
	\arrow["q", from=1-1, to=2-1]
	\arrow["q", from=1-3, to=2-3]
	\arrow["{\phi_t}"', from=2-1, to=2-3]
\end{tikzcd}\]
is an isomorphism of vector bundles.

For each pair $(t_0,v_0)\in J\times \R^n$, the smooth curve\footnote{The construction of $\gamma^{(t_0,v_0)}$ is the standard construction of time-dependent flow lines of a time dependent vector field, see e.g. the proof of \cite[Theorem 9.48]{Lee13}. 
}
\[\gamma^{(t_0,v_0)}\colon J\to\R^n, \qquad s\mapsto (\pr_{\R^n}\circ\, \Phi)(s-t_0, t_0, v_0)
\]
satisfies 
\[ \gamma^{(t_0,v_0)}(t_0)=\pr_{\R^n}\left(\Phi(0, t_0, v_0)\right)=\pr_{\R^n}(t_0,v_0)=v_0.
\]
In addition, 
\[  \Phi(s-t_0, t_0, v_0)=\left(s, \gamma^{(t_0,v_0)}(s)\right)
\]
by definition of $\gamma^{(t_0,v_0)}$ and since $\Phi$ is linear over $\phi$. Hence 
for all $s\in J$
\begin{equation*}
\begin{split}
\frac{d}{ds}\gamma^{(t_0,v_0)}(s)&=\frac{d}{ds}(\pr_{\R^n}\circ\, \Phi)(s-t_0, t_0, v_0)=\pr_{T\R^n}\left(
\frac{d}{ds}\Phi(s-t_0, t_0, v_0)
\right)\\
&=\pr_{T\R^n}\left(
X\left(\Phi(s-t_0, t_0, v_0)\right)\right)=\pr_{T\R^n}\left(s, 1, \gamma^{(t_0,v_0)}(s), A(s)\left(\gamma^{(t_0,v_0)}(s)\right)
\right)\\
&=\left(\gamma^{(t_0,v_0)}(s), A(s)\left(\gamma^{(t_0,v_0)}(s)\right)
\right).\qedhere
\end{split}
\end{equation*}
\end{proof}
Note that if $A$ does not depend on $t$, then $\gamma^{(t_0,v_0)}$ is, as usual, explicitly given by 
\[ t\mapsto 
\exp((t-t_0)A)\cdot v_0,\] 
but in general the differential equation is obtained by computing the time-ordered exponential of the matrix-valued function $A$, see e.g.~\cite{Lam98}.

\bigskip

Theorem \ref{lemma_linear_flow} has as well the following immediate corollary, which will be useful later on.
\begin{corollary}\label{lemma_flat_sections}
In the situation of Theorem \ref{lemma_linear_flow}, set $\chi=\widehat{D}$ for a linear derivation $D\colon \Gamma(E)\to\Gamma(E)$ with symbol $X\in\mx(M)$. For $t\in \R$ and $e\in\Gamma(E)$ define 
\[ \Phi_t^\star e \in \Gamma_{M_t}(E)
\]
to be the smooth section given by
\[ (\Phi_t^\star e)_x= \Phi_{-t}(e(\phi_t(x)))
\]
for all $x\in M_t$. Then\footnote{This derivative is taken pointwise, i.e.~in fibers of $E$.}
\[ De=\left.\frac{d}{dt}\right\an{t=0} \Phi_t^\star e.
\]
In particular $\Phi_t^\star e = e$ on $M_t$ for all $t$ if and only if $De=0$. 
\end{corollary}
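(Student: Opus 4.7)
The plan is to compute the $t$-derivative of $\Phi_t^\star e$ pointwise at $t=0$ using the chain rule, compare the result with the explicit formula \eqref{explicit_hat_D} for $\widehat{D}$, and then bootstrap to all $t$ using the cocycle property of $\Phi^\star$.

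Fix $x\in M$ and consider the curve
\[
\gamma^x\colon J^x\to E,\qquad \gamma^x(t)=\Phi_{-t}(e(\phi_t(x))),
\]
where $J^x$ is the open interval of times $t$ with $x\in M_t\cap M_{-t}$. Since $\Phi_{-t}$ sends $E_{\phi_t(x)}$ to $E_{\phi_{-t}\phi_t(x)}=E_x$ by Theorem \ref{lemma_linear_flow}, the curve $\gamma^x$ lies entirely in the fiber $E_x$. Hence $\dot\gamma^x(0)\in T_{e(x)}E_x$, which under the canonical identification $T_{e(x)}E_x\simeq E_x$ is precisely the ``pointwise'' time-derivative on the right-hand side of the claimed identity. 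So the first goal is to show $\dot\gamma^x(0)\simeq D(e)(x)$ under this identification.

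Writing $\gamma^x(t)=F(t,G(t))$ with $F(s,p)=\Phi_{-s}(p)$ and $G(t)=e(\phi_t(x))$, the chain rule in $TE$ gives
\[
\dot\gamma^x(0)=-\chi(e(x))+_E T_x e(X(x)),
\]
using that $\partial_s F(0,p)=-\chi(p)$ and $F(0,\cdot)=\id_E$, and that $\dot G(0)=T_xe(X(x))$. Plugging in the explicit expression \eqref{explicit_hat_D} for $\chi=\widehat D$ at $e(x)$ and cancelling the common term $T_xe(X(x))$ in the tangent space $T_{e(x)}E$, I get
\[
\dot\gamma^x(0)=-\left.\frac{d}{dt}\right\an{t=0}(e(x)-tD(e)(x))=\left.\frac{d}{dt}\right\an{t=0}(e(x)+tD(e)(x)),
\]
which is exactly the vertical vector at $e(x)$ corresponding to $D(e)(x)\in E_x$. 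This proves the first assertion; the only real care needed is bookkeeping about the two additions on $TE$ and the identification of core vectors at $e(x)$ with elements of $E_x$.

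For the ``in particular'' clause, one implication is trivial: if $\Phi_t^\star e = e$ on $M_t$ for all $t$, differentiating at $t=0$ gives $D(e)=0$. For the converse, I promote the $t=0$ computation to every $t\in J^x$ by means of the cocycle identity $\Phi_{s+t}^\star e=\Phi_s^\star(\Phi_t^\star e)$, which follows directly from $\Phi_{-s-t}=\Phi_{-t}\circ\Phi_{-s}$ and $\phi_{s+t}=\phi_s\circ\phi_t$. Using that $\Phi_{-t}$ is linear on fibers, one checks that for fixed $t\in J^x$,
\[
\left.\frac{d}{ds}\right\an{s=0}\Phi_{s+t}^\star e(x)=\Phi_{-t}\!\left(\left.\frac{d}{ds}\right\an{s=0}\Phi_s^\star e(\phi_t(x))\right)=\Phi_{-t}(D(e)(\phi_t(x))).
\]
If $D(e)=0$ this derivative vanishes for every $t\in J^x$, so $t\mapsto \Phi_t^\star e(x)$ is constant on $J^x$ and equal to its value at $t=0$, namely $e(x)$. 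The main (minor) obstacle is just keeping track of domains: $\Phi_t^\star e$ is only defined on $M_t$, not globally, but the cocycle identity applied pointwise at $x$ is valid on the open interval $J^x$, which is all that is needed.
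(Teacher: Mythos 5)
Your proof is correct, but it takes a genuinely different route from the paper's. The paper derives the identity $De=\left.\frac{d}{dt}\right\an{t=0}\Phi_t^\star e$ from the bracket relation $[\widehat D,e^\uparrow]=(De)^\uparrow$ of Proposition \ref{brackets}, combined with the standard formula expressing a Lie bracket as the $t$-derivative of the flow pullback $\Phi_t^*(e^\uparrow)$; the resulting core vector field is then identified by evaluating on pullback functions $q^*f$ and linear functions $\ell_\epsilon$. You instead differentiate the curve $t\mapsto\Phi_{-t}(e(\phi_t(x)))$ directly with the chain rule in $T_{e(x)}E$ and cancel against the explicit formula \eqref{explicit_hat_D} for $\widehat D$; your computation $-\chi(e(x))+_ET_xe(X(x))=\left.\frac{d}{dt}\right\an{t=0}(e(x)+tD(e)(x))$ is exactly right, and the identification of this vertical vector with $D(e)(x)\in E_x$ is the canonical core identification. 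Your approach is more elementary and self-contained given \eqref{explicit_hat_D}, and it has the added merit of actually proving the nontrivial direction of the ``in particular'' clause via the cocycle identity -- the paper leaves this implicit, and your intermediate formula $\left.\frac{d}{ds}\right\an{s=0}(\Phi_{s+t}^\star e)(x)=\Phi_{-t}(D(e)(\phi_t(x)))$ is essentially the unproved remark \eqref{eq_d_flow}. The paper's route, by contrast, recycles the bracket proposition and avoids chain-rule bookkeeping in $TE$. One small correction: your interval $J^x=\{t\mid x\in M_t\cap M_{-t}\}$ is unnecessarily small; the condition $x\in M_{-t}$ is not needed (only $\phi_t(x)\in M_{-t}$, which is automatic), and for the conclusion ``$\Phi_t^\star e=e$ on $M_t$ for \emph{all} $t$'' you should run the constancy argument on the full interval $\{t\mid x\in M_t\}$, where it works verbatim.
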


Note that Corollary \ref{lemma_flat_sections} implies as well that 
\begin{equation}\label{eq_d_flow}
\Phi_t^\star(De)=\frac{d}{dt}\Phi_t^\star e=D(\Phi_t^\star e)
\end{equation}
on $M_t$ for all $t\in\mathbb R$.

\begin{proof}
For  $x\in M$ and $e'\in E_x$ compute
\begin{equation*}
\begin{split}
(De)^\uparrow \an{e'} &=\left.\left[ \widehat D, e^\uparrow\right]\right\an{e'}=\left.\frac{d}{dt}\right\an{t=0}(\Phi_t^*(e^\uparrow))\an{e'}\\
&= \left.\frac{d}{dt}\right\an{t=0} T_{\Phi_t(e')}\Phi_{-t}\left( e^\uparrow (\Phi_t(e'))\right)\\
&=  \left.\frac{d}{dt}\right\an{t=0} \left.\frac{d}{ds}\right\an{s=0}\Phi_{-t}\left( \Phi_t(e')+se(\phi_t(x))\right)\\
&=  \left.\frac{d}{dt}\right\an{t=0} \left.\frac{d}{ds}\right\an{s=0}e'+s \Phi_{-t}(e(\phi_t(x))).
\end{split}
\end{equation*}
Take $f\in C^\infty(M)$. Then 
\begin{equation*}
\begin{split}
\left((De)^\uparrow \an{e'}\right)(q^*f) &= \left.\frac{d}{dt}\right\an{t=0} \left.\frac{d}{ds}\right\an{s=0}(q^*f)\left(e'+s \Phi_{-t}(e(\phi_t(x)))\right)\\
&=\left.\frac{d}{dt}\right\an{t=0}\left.\frac{d}{ds}\right\an{s=0}f(x)=0=\left(\left.\left(\left.\frac{d}{dt}\right\an{t=0}\Phi_t^\star e\right)^\uparrow\right\an{e'}\right)(q^*f).
\end{split}
\end{equation*}
Take $\epsilon\in\Gamma(E^*)$. Then 
\begin{equation*}
\begin{split}
\left((De)^\uparrow \an{e'}\right)(\ell_\epsilon) &= \left.\frac{d}{dt}\right\an{t=0} \left.\frac{d}{ds}\right\an{s=0}\ell_\epsilon\left(e'+s \Phi_{-t}(e(\phi_t(x)))\right)\\
&= \left.\frac{d}{ds}\right\an{s=0} \left.\frac{d}{dt}\right\an{t=0}\left(\left\langle\epsilon(x), e'\rangle +s \langle\epsilon(x),\Phi_{-t}(e(\phi_t(x)))\right\rangle\right)\\
&=\left.\frac{d}{ds}\right\an{s=0}s \left\langle\epsilon(x), \left.\frac{d}{dt}\right\an{t=0}\Phi_{-t}(e(\phi_t(x)))\right\rangle\\
&=\left\langle\epsilon(x), \left.\frac{d}{dt}\right\an{t=0}\Phi_{-t}(e(\phi_t(x)))\right\rangle
=\left(\left.\left(\left.\frac{d}{dt}\right\an{t=0}\Phi_t^\star e\right)^\uparrow\right\an{e'}\right)(\ell_\epsilon). \qedhere
\end{split}
\end{equation*}
\end{proof}

\section{Applications}
This section collects a few further  useful applications of Theorem \ref{lemma_linear_flow}.
\subsection{Vector bundles over contractible manifolds are trivial}
Theorem \ref{lemma_linear_flow} implies  the following corollary, which is standard, but the proof of which is not easy to find in the smooth case in the literature. 
\begin{corollary}\label{cor_standard_vb_over_int}
Let $M$ be a smooth manifold and let $I\subseteq \mathbb R$ be an open interval. Choose $t_0\in I$ and set $\iota\colon M\to I\times M$ to be the smooth inclusion $x\mapsto (t_0,x)$. Then 
each vector bundle $F\to I\times M$ is isomorphic to $I\times \iota^!F\to I\times M$, via an isomorphism over the identity on $I\times M$.
\end{corollary}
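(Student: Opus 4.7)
The strategy is to trivialise $F$ in the $I$-direction by using the flow of a linear vector field that lifts a complete vector field on $I \times M$ which moves every slice $\{t\} \times M$ to $\{t_0\} \times M$, and then invoking Theorem \ref{lemma_linear_flow}.

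First, I fix a nowhere-vanishing, complete vector field $Y \in \mx(I)$. Since any open interval is diffeomorphic to $\R$, I can take $Y = h_*\partial_s$ for a diffeomorphism $h \colon \R \to I$ with $h(0) = t_0$. Writing $\psi$ for the (global) flow of $Y$, the map $s \mapsto \psi_s(t_0)$ is a diffeomorphism $\R \to I$; let $\tau \colon I \to \R$ denote its smooth inverse, so that $\tau(t_0) = 0$ and $\psi_{\tau(t)}(t_0) = t$ for all $t \in I$. I then lift $Y$ to $X \in \mx(I \times M)$ by $X(t, x) = (Y(t), 0_x)$; its flow $\phi_s(t, x) = (\psi_s(t), x)$ is defined on all of $\R \times I \times M$ and fixes the $M$-coordinate.

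Next, I pick any linear connection $\nabla$ on $F$ (such a connection exists via partitions of unity) and set $D := \nabla_X \in \lie D(F)$, which is a linear derivation with symbol $X$. Theorem \ref{lemma_linear_flow} then gives a globally defined flow $\Phi \colon \R \times F \to F$ of $\widehat D$, with each $\Phi_s$ a vector bundle isomorphism covering $\phi_s$. Since $\phi_{\tau(t)}(t_0, x) = (t, x)$, the map $\Phi_{\tau(t)}$ restricts to a linear isomorphism $F_{(t_0, x)} \to F_{(t, x)}$ for every $(t, x) \in I \times M$. Using the identification $\iota^!F|_x = F_{(t_0, x)}$, I define the trivialisation by
\[ \Psi \colon I \times \iota^!F \to F, \qquad \Psi(t, x, f) := \Phi_{\tau(t)}(f).
\]
This is smooth and fiberwise linear by the corresponding properties of $\Phi$, covers the identity on $I \times M$, and has inverse $f \in F_{(t, x)} \mapsto (t, x, \Phi_{-\tau(t)}(f))$.

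The main obstacle is ensuring that the flow $\Phi$ exists long enough to reach every point of $I$ from $t_0$: naively taking $X = \partial_t$ fails when $I$ is bounded, because then $\partial_t$ on $I$ is not complete. Choosing $Y$ complete and nonvanishing on $I$ circumvents this, and the flow-domain statement in Theorem \ref{lemma_linear_flow} then transfers completeness from $\phi$ on $I \times M$ directly to $\Phi$ on $F$; everything else is routine.
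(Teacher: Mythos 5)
Your proof is correct, but it takes a small detour that the paper avoids, and your stated reason for the detour is mistaken. You claim that taking the base vector field to be $\partial_t$ ``fails when $I$ is bounded, because then $\partial_t$ on $I$ is not complete.'' Completeness is not needed: the paper works directly with a linear lift $X\in\mx^l(F)$ of $\partial_t\in\mx(I\times M)$, whose flow domain is $\Omega=\{(s,t,x)\mid s+t\in I\}$, and by the flow-domain statement of Theorem \ref{lemma_linear_flow} the lifted flow $\Phi$ is defined at $(t_0-t,\,e_{(t,x)})$ for \emph{every} $e_{(t,x)}\in F$, precisely because $(t_0-t)+t=t_0\in I$. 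So $\Theta(e_{(t,x)})=(t,\Phi(t_0-t,e_{(t,x)}))$ is already well defined with no reparametrisation. Your workaround --- replacing $\partial_t$ by a complete nonvanishing field $Y=h_*\partial_s$ on $I$, lifting via a connection, and composing with the smooth time-reparametrisation $\tau=h^{-1}$ --- is sound: the flow of your $X$ on $I\times M$ is indeed global, Theorem \ref{lemma_linear_flow} then makes $\Phi$ global on $\R\times F$, and your $\Psi(t,x,f)=\Phi_{\tau(t)}(f)$ is a smooth fibrewise-linear bijection over $\id_{I\times M}$ with the stated inverse. What the reparametrisation buys you is that you only ever invoke the theorem for complete flows, at the cost of the auxiliary diffeomorphism $h$; what the paper's choice buys is a cleaner formula and the observation that the precise description of the flow domain in Theorem \ref{lemma_linear_flow} is exactly what makes the incompleteness of $\partial_t$ harmless.
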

This implies immediately that \textbf{a smooth vector bundle over a contractible manifold is trivial}: consider a contractible smooth manifold $M$ and a vector bundle $E\to M$. Take a smooth map $\varphi\colon I\times M\to M$, with $I\subseteq \mathbb R$ an open interval containing $[0,1]$ and such that $\varphi_0\equiv x_0$ for some $x_0\in M$ and $\varphi_1=\id_M$. Let $\iota_j\colon M\to I\times M$ be the smooth inclusion $x\mapsto (j,x)$ for $j=0,1$. Then $\varphi^! E$ is a smooth vector bundle over $I\times M$. Hence by Corollary \ref{cor_standard_vb_over_int} $\varphi^!E\simeq I\times \iota_0^!(\varphi^!E)=I\times \varphi_0^!E=I\times M\times  E_{x_0}$ over the identity on $I\times M$. As a consequence, $E\simeq \id_M^!E=\varphi_1^!E=\iota_1^!\varphi^!E=\iota_1^!(I\times M\times  E_{x_0})\simeq M\times E_{x_0}$ over the identity on $M$.

\begin{proof}[Proof of Corollary \ref{cor_standard_vb_over_int}]
Consider any linear vector field $X\in\mx^l(F)$ over the vector field $\partial_t\in\mx(I\times M)$. Such a linear vector field exists since any vector field on $I\times M$ can be lifted to a linear vector field on $F$, see \eqref{ses}.
The flow of $\partial_t$ is given by 
\[ \phi\colon\Omega=\{(t,s,x)\in \mathbb R\times I\times M\mid t+s\in I\}\to I\times M, \quad (t,s,x)\mapsto (t+s,x),
\]
and so by Theorem \ref{lemma_linear_flow} the flow of $X$ is a map
\[ \Phi\colon \widetilde \Omega=\{(t,e_{(s,x)})\in \mathbb R\times F\mid t+s\in I\}\to F
\]
such that for all $t\in \mathbb R$, 
\[\begin{tikzcd}
	{F\arrowvert_{((I-t)\cap I)\times M}} && {F\arrowvert_{(I\cap (I+t))\times M}} \\
	{((I-t)\cap I)\times M} && {(I\cap (I+t))\times M}
	\arrow["{\Phi_t}", from=1-1, to=1-3]
	\arrow[from=1-1, to=2-1]
	\arrow[from=1-3, to=2-3]
	\arrow["{\phi_t}"', from=2-1, to=2-3]
\end{tikzcd}\]
is an isomorphism of vector bundles.
 The map 
\[\begin{tikzcd}
	F && {I\times \iota^!F} \\
	{I\times M} && {I\times M}
	\arrow["\Theta", from=1-1, to=1-3]
	\arrow[from=1-1, to=2-1]
	\arrow[from=1-3, to=2-3]
	\arrow["{\id_{I\times M}}"', from=2-1, to=2-3]
\end{tikzcd}\]
defined 
by 
\[ \Theta(e_{(t,x)})=\left(t, \Phi(t_0-t, e_{(t,x)})\right)
\]
is well-defined, smooth by definition, and covers the identity on $I\times M$. It is a diffeomorphism since it has the smooth inverse
\[ \Theta^{-1}\colon I\times \iota^!F\to F, \qquad \Theta^{-1}(t, e_{(t_0,x)})=\Phi(t-t_0, e_{(t_0,x)}).
\]
It remains to show that $\Theta$ is fibrewise linear. But this follows immediately from the fact that $\Phi_t$ is a vector bundle isomorphism for all $t\in\mathbb R$.
\end{proof}

\subsection{Flows of derivations on tensor products of vector bundles}\label{constr_conn}
Let $E\to M$ and $F\to M$ be two vector bundles over a same base manifold $M$.
Two linear derivations $D_ E\colon\Gamma(E)\to\Gamma(E)$ and $D_ F\colon\Gamma(F)\to\Gamma(F)$ with a same symbol $X\in\mx(M)$ define together a linear derivation $D$ on $E\otimes F$ via 
\[ D(e\otimes f)=(D_Ee)\otimes f+ e\otimes(D_Ff)
\]
for all  $e\in\Gamma(E)$ and $f\in\Gamma(F)$. This can be extended to tensor products of arbitrary many vector bundles, for instance to powers of $E^{\otimes k}\otimes (E^*)^{\otimes l}$ for $k,l\in\N$.

This section computes the flow $\Phi^{E^*}$ of $\widehat{D_ E^*}\in\mx^l(E^*)$, where $D_ E^*$ is the dual linear derivation to $D_E$  on $E^*$, as well as the flow $\Phi$ of $\widehat{D}\in\mx^l(E\otimes F)$.
Let $\phi\colon \Omega\to M$ be the flow of $X\in\mx(M)$ and let $\Phi^E\colon \widetilde\Omega^E\to E$ be the flow of $\widehat{D_ E}\in\mx^l(E)$. Let $\Phi^F\colon \widetilde\Omega^F\to F$ be the flow of $\widehat{D_F}\in\mx^l(F)$, and let $q_ *\colon E^*\to M$ be the vector bundle projection of $E^*$.

\medskip
First choose $x\in M$ and $\epsilon\in E^*\an{x}$. Consider the curve 
\[ \gamma\colon I_x\to E^*, \qquad t\mapsto \Phi_{-t}^*(\epsilon)=\epsilon\circ\Phi_{-t}\in E^*\an{\phi_t(x)},
\]
with $I_x:=\Omega\cap(\mathbb R\times\{x\})$, the domain of definition of the maximal integral curve of $\phi$ through $x$ at time $0$.
Then for $e\in\Gamma(E)$ (for simplicity a global section) and any $t\in I_x$
\begin{equation*}
\begin{split}
\dot\gamma(t)(\ell_e)&=\frac{d}{dt}\langle \gamma(t), e\an{q_*(\gamma(t))}\rangle =\frac{d}{dt}\langle\epsilon,  \Phi_{-t}(e\an{\phi_t(x)})\rangle\\
&= \frac{d}{dt}\langle \epsilon, (\Phi_t^\star e)\an{x}\rangle=\langle \epsilon, (\Phi_t^\star(D_Ee))\an{x}\rangle\\
&=\langle \gamma(t), D_Ee\an{\phi_t(x)}\rangle=\ell_{D_Ee}(\gamma(t))=\widehat{D_E^*}\an{\gamma(t)}(\ell_e)
\end{split}
\end{equation*}
by Corollary \ref{lemma_flat_sections}.
This shows that $\gamma$ is an integral curve of $\widehat{D_E^*}$ through $\epsilon$ at time $0$. Since the flow domain of $\widehat{D_E^*}$ is 
\[ \widetilde{\Omega}^{E^*}=\left\{(t,\epsilon)\in \mathbb R\times E^*\mid (t, q_*(\epsilon))\in\Omega\right\},
\]
the curve $\gamma$ must be the maximal integral curve of $\widehat{D_E^*}$ through $\epsilon$ at time $0$.
The flow $\Phi^{E^*}$ of $\widehat{D_E^*}\in\mx^l(E^*)$ is hence defined by 
\begin{equation}\label{dual_flow} \Phi^{E^*}\colon \{(t, \epsilon)\in\R\times E^*\mid (t,q_{E^*}(\epsilon))\in\Omega\}\to E^*, 
\qquad  (t,\epsilon)\mapsto \epsilon\circ \Phi^E_{-t}.
\end{equation}
Given $\epsilon\in\Gamma(E^*)$, the induced curve of sections of $E^*$ through $\epsilon$ at time $0$  is then 
\[
(\Phi^{E^*})_t^\star\epsilon\in\Gamma_{M_t}(E^*)
\]
for all $t\in\mathbb R$, 
\[ ((\Phi^{E^*})_t^\star\epsilon)_x=\Phi^{E^*}_{-t}(\epsilon(\phi_t(x)))=\epsilon\an{\phi_t(x)}\circ \Phi^E_t\in E^*\an x
\]
for all $x\in M_t$.

\medskip
Next, since the flow of $\widehat{D}\in\mx^l(E\otimes F)$ is linear, it is enough to determine it on a basis of $(E\otimes F)_x$ at each $x\in M$.
Hence it is enough to determine it on elementary tensors.
Choose $x\in M$, $e\in\Gamma(E)$ and $f\in\Gamma(F)$ global sections. Consider the curve
\[ \gamma\colon I_x\to E\otimes F,\qquad  t\mapsto \Phi^E_t(e\an{x})\otimes \Phi^F_t(f\an{x}) \in (E\otimes F)\an{\phi_t(x)}.
\]
Choose further a (global) section $\epsilon$ of $E^*$ and a (global) section $\kappa$ of $F^*$ and compute
\begin{equation*}
\begin{split}
\dot \gamma(t)(\ell_{\epsilon\otimes \kappa})&=\frac{d}{dt}\left\langle\epsilon\an{\phi_t(x)}, \,\Phi^E_t(e\an{x})\right\rangle
\cdot \left\langle\kappa\an{\phi_t(x)}, \,\Phi^F_t(f\an{x})\right\rangle\\
&=\frac{d}{dt}\left\langle((\Phi^{E^*})^\star_{t}\epsilon)\an{x}, e\an{x}\right\rangle
\cdot \left\langle((\Phi^{F^*})^\star_{t}\kappa)\an{x}, f\an{x}\right\rangle\\
&\overset{\eqref{eq_d_flow}}{=}
\left\langle ((\Phi^{E^*})^\star_{t}(D_E^*\epsilon))\an{x}, e\an{x}\right\rangle
\cdot \left\langle((\Phi^{F^*})^\star_{t}\kappa)\an{x}, f\an{x}\right\rangle\\
&\quad +\left\langle\epsilon\an{\phi_t(x)}, \,\Phi^E_t(e\an{x})\right\rangle
\cdot \left\langle((\Phi^{F^*})^\star_{t}(D_F^*\kappa))\an{x}, f\an{x}\right\rangle\\
&=
\left\langle (D_E^*\epsilon)\an{\phi_t(x)},  \,\Phi^E_t(e\an{x})\right\rangle
\cdot  \left\langle\kappa\an{\phi_t(x)}, \,\Phi^F_t(f\an{x})\right\rangle\\
&\quad +\left\langle\epsilon\an{\phi_t(x)}, \,\Phi^E_t(e\an{x})\right\rangle
\cdot \left\langle(D_F^*\kappa)\an{\phi_t(x)}, \,\Phi^F_t(f\an{x})\right\rangle\\
&=\ell_{(D_E^*\epsilon)\otimes \kappa+\epsilon\otimes(D_F^*\kappa)}(\gamma(t))
=\ell_{D^*(\epsilon\otimes \kappa)}(\gamma(t))=\widehat{D}\an{\gamma(t)}(\ell_{\epsilon\otimes\kappa})
\end{split}
\end{equation*}
for all $t\in I_x$.
Also  $\dot \gamma(t)(q_{E\otimes F}^*g)=X\an{\phi_t(x)}(g)=\widehat{D}\an{\gamma(t)}(q_{E\otimes F}^*g)$
for $g\in C^\infty(M)$.
Hence 
\[\dot\gamma(t)=\widehat{D}\an{\gamma(t)}
\]
for all $t\in I_x$ and 
the flow of $\widehat{D}\in\mx^l(E\otimes F)$ is pointwise the tensor product of the flows of $\widehat{D_E}$ and $\widehat{D_F}$:
\begin{equation}\label{tensor_flow}
\begin{split}
 \Phi\colon  \{(t, \chi)\in\R\times E\otimes F\mid (t,q_{E\otimes F}(\chi))\in\Omega\}&\to E\otimes F, \\
 (t,e\otimes f)&\mapsto (\Phi_t^Ee)\otimes(\Phi_t^Ff)
\end{split}
\end{equation}
for all elementary tensors $e\otimes f\in E\otimes F$.
Given $e\in\Gamma(E)$ and $f\in\Gamma(F)$ the 
induced curve of sections of $E\otimes F$ is then 
\[
\Phi_t^\star(e\otimes f)\in\Gamma_{M_t}(E\otimes F)
\]
for all $t\in\mathbb R$, 
\[ (\Phi_t^\star(e\otimes f))_x=(\Phi_t^\star e)\an{x}\otimes (\Phi_t^\star f)\an{x}
\]
for all $x\in M_t$.

\subsection{The kernel of a transitive Lie algebroid}
This section now provides an elementary proof of the fact that the kernel of a transitive Lie algebroid is a Lie algebra bundle.

\begin{proposition}\label{prop_K}
Let $A\to M$ be a transitive Lie algebroid.
\begin{enumerate}
\item  The kernel 
\[ K=\ker(\rho\colon A\to TM)
\]
is a subalgebroid of $A$. Since the anchor vanishes on $K$, it is a bundle of Lie algebras.
\item $\nabla\colon\Gamma(A)\times\Gamma(K)\to \Gamma(K)$ 
defined by 
\[ \nabla_ak=[a,k]
\]
for all $a\in\Gamma(A)$ and all $k\in\Gamma(K)$ is a flat $A$-connection on $K$.
\item For all $k_1,k_2\in\Gamma(K)$ and $a\in\Gamma(A)$
\[
\nabla_a[k_1,k_2]=[\nabla_ak_1, k_2]+[k_1,\nabla_ak_2].
\]
\end{enumerate}
\end{proposition}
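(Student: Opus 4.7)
The plan is to verify the three items in order, each reducing to a direct computation with the Lie algebroid axioms (anchor-bracket compatibility $\rho[a,b] = [\rho a, \rho b]$ and the Leibniz rule $[a, fb] = f[a,b] + (\rho(a)f)b$), together with the Jacobi identity for $[\cdot,\cdot]$ on $\Gamma(A)$.

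For (1), I would first argue that $K$ is a smooth vector subbundle of $A$. Transitivity means $\rho\colon A \to TM$ is fiberwise surjective, hence of constant rank $\dim M$, so $K = \ker \rho$ is a subbundle of rank $\mathrm{rk}\,A - \dim M$. To see that $\Gamma(K)$ is closed under $[\cdot,\cdot]$, compute $\rho[k_1, k_2] = [\rho k_1, \rho k_2] = 0$ for $k_1, k_2 \in \Gamma(K)$. Thus $K$ is a Lie subalgebroid. Moreover, since $\rho\an{K} = 0$, the Leibniz rule collapses to $[k_1, fk_2] = f[k_1, k_2]$ for all $f \in C^\infty(M)$, so the bracket on $\Gamma(K)$ is $C^\infty(M)$-bilinear and therefore pointwise, making $K$ a bundle of Lie algebras.

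For (2), I would check the three conditions for an $A$-connection on $K$ in turn. Well-definedness ($\nabla_a k \in \Gamma(K)$) follows exactly as in (1): $\rho[a,k] = [\rho a, 0] = 0$. $C^\infty(M)$-linearity in $a$ uses $\rho(k) = 0$: $\nabla_{fa} k = [fa, k] = f[a,k] - (\rho(k)f)a = f \nabla_a k$. The Leibniz rule in $k$ is just the Lie algebroid Leibniz rule: $\nabla_a(fk) = [a, fk] = (\rho(a)f)k + f[a,k]$. Flatness follows from the Jacobi identity on $\Gamma(A)$: for $a,b \in \Gamma(A)$ and $k \in \Gamma(K)$,
\[
R^\nabla(a,b)k = [a,[b,k]] - [b,[a,k]] - [[a,b],k] = 0.
\]

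For (3), the equation is a direct rewriting of the Jacobi identity once more:
\[
\nabla_a[k_1, k_2] = [a, [k_1, k_2]] = [[a,k_1], k_2] + [k_1, [a,k_2]] = [\nabla_a k_1, k_2] + [k_1, \nabla_a k_2].
\]

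There is no real obstacle here; the only step needing care is the verification in (1) that $K$ is actually a smooth subbundle, where transitivity of $A$ is essential to ensure that $\rho$ has constant rank. Everything else is formal manipulation with the Lie algebroid axioms, and in particular uses crucially that $\rho$ annihilates sections of $K$.
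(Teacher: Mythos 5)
Your proof is correct and is precisely the ``simple computation'' that the paper leaves to the reader: constant rank of $\rho$ from transitivity gives the subbundle, anchor--bracket compatibility and the Leibniz rule give items (1) and (2), and the Jacobi identity gives flatness and item (3). Nothing further is needed.
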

\begin{proof}
The proof is a simple computation which is left to the reader.
\end{proof}

The following theorem shows that the kernel $K$ of a transitive Lie algebroid $A\to M$ is not only a bundle of Lie algebras, but a Lie algebra bundle. This fact is standard, and proofs of it can be found in \cite{CrFe03}, \cite{Mackenzie05}, \cite{Meinrenken21}, \cite{JoMa24}.
\begin{theorem}\label{kernel_lab}
Let $A\to M$ be a transitive Lie algebroid with anchor $\rho$ and assume that $M$ is connected. Then the kernel $K$ of $\rho$ is a Lie algebra bundle.
\end{theorem}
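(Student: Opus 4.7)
My plan is to use the flow machinery of Theorem \ref{lemma_linear_flow} and the tensor product flow formula of Section \ref{constr_conn} to show that parallel transport in $K$ along flows of vector fields on $M$ preserves the fibrewise Lie bracket; local triviality of $K$ as a bundle of Lie algebras will then follow readily. Since $\rho \colon A \to TM$ is surjective, I would first choose a smooth splitting $\sigma \colon TM \to A$ with $\rho\circ\sigma = \id_{TM}$. For each $X \in \mx(M)$, the map $\nabla^\sigma_X \colon \Gamma(K) \to \Gamma(K)$, $k \mapsto [\sigma(X), k]_A$, is a linear derivation of $K$ with symbol $X$, and applying Proposition \ref{prop_K}(3) to $a = \sigma(X)$ gives
\[
\nabla^\sigma_X [k_1, k_2]_K = [\nabla^\sigma_X k_1, k_2]_K + [k_1, \nabla^\sigma_X k_2]_K,
\]
so that $\nabla^\sigma_X$ is a derivation of the fibrewise Lie bracket on $K$.

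Next, I would view the Lie bracket as a global section $\beta \in \Gamma(K^* \otimes K^* \otimes K)$. The derivation $\nabla^\sigma_X$ induces a derivation $\tilde D_X$ on $K^* \otimes K^* \otimes K$ with the same symbol $X$ (see Section \ref{constr_conn}), and the displayed identity above becomes precisely $\tilde D_X \beta = 0$. By Corollary \ref{lemma_flat_sections}, $\beta$ is then preserved by the flow of $\widehat{\tilde D_X}$. Combining \eqref{dual_flow} and \eqref{tensor_flow}, this flow acts on elementary tensors as the tensor product of the parallel transport $\Phi^{K,X}_t$ on $K$ and its contragredients on $K^*$, so unpacking the invariance of $\beta$ yields
\[
\Phi^{K,X}_t\bigl([k_1, k_2]_{K_x}\bigr) = \bigl[\Phi^{K,X}_t(k_1), \Phi^{K,X}_t(k_2)\bigr]_{K_{\phi^X_t(x)}}
\]
for all $x \in M_t$ and $k_1, k_2 \in K_x$. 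In other words, parallel transport along the flow of any $X \in \mx(M)$ is a Lie algebra isomorphism between the corresponding fibers of $K$.

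For local triviality around a fixed $x_0 \in M$, I would pick a chart on a neighborhood $U$ of $x_0$ with coordinate vector fields $\partial_1,\ldots,\partial_n$, identifying $x_0$ with $0$. On a sufficiently small open $V \subseteq U$ around $0$ on which the following composition of flows is defined, set
\[
\Psi \colon V \times K_{x_0} \to K\an V, \qquad \Psi(y, k) = \Phi^{K, \partial_n}_{y_n} \circ \cdots \circ \Phi^{K, \partial_1}_{y_1}(k).
\]
Theorem \ref{lemma_linear_flow} and smooth dependence of flows on initial conditions make $\Psi$ a smooth vector bundle isomorphism over $V$, and by the previous paragraph each factor, hence the composition, is a Lie algebra isomorphism on fibers, so $\Psi$ trivializes $K$ over $V$ as a bundle of Lie algebras. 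Connectedness of $M$ finally ensures that all fibers are mutually Lie-isomorphic: the set $\{x \in M \mid K_x \simeq K_{x_0}\text{ as Lie algebras}\}$ is open by this local triviality, and closed because its complement is a union of sets of the same form, hence is all of $M$.

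The main technical step will be the second paragraph: converting the algebraic Leibniz identity of Proposition \ref{prop_K}(3) into the geometric statement that parallel transport is a Lie algebra morphism, via careful bookkeeping with the tensor product flow formula \eqref{tensor_flow}. Once this correspondence is in place, the local trivialization obtained by composing coordinate flows and the open-closed connectedness argument are both entirely routine.
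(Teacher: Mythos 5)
Your proposal is correct and follows essentially the same route as the paper's proof: encode the fibrewise bracket as a flat section $B$ of $K^*\otimes K^*\otimes K$ for the connection $\nabla_a=[a,\cdot]$, use Corollary \ref{lemma_flat_sections} together with the tensor-product flow formulas of Section \ref{constr_conn} to conclude that the flows $\Phi^{\widehat{\nabla_a}}_t$ are fibrewise Lie algebra isomorphisms, and then trivialize $K$ over a coordinate cube by composing the flows of lifts of the commuting coordinate vector fields. The only cosmetic differences are that you lift the $\partial_i$ via a global splitting $\sigma$ of $\rho$ where the paper just picks local sections $a_i$ with $\rho\circ a_i=\partial_i^\varphi$, and that you spell out the open--closed connectedness argument which the paper leaves implicit.
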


\begin{proof}
Since the Lie bracket on $K$ is defined pointwise, it defines a tensor $B\in\Gamma(K^*\otimes K^*\otimes K)$,
\[ B(k_1,k_2)=[k_1,k_2]
\]
for all $k_1, k_2\in\Gamma(K)$. The vector bundle $K^*\otimes K^*\otimes K$ is equipped with the $A$-connection inherited from the one on $K$ that is found in (2) of Proposition \ref{prop_K}:
for $\psi\in \Gamma(K^*\otimes K^*\otimes K)$ and $a\in\Gamma(A)$ the tensor $\nabla_a\psi$ is defined by 
\begin{equation*}
\begin{split}
(\nabla_a\psi)(k_1,k_2)&=\nabla_a(\psi(k_1,k_2))-\psi(\nabla_ak_1,k_2)-\psi(k_1,\nabla_ak_2)\\
&=[a, \psi(k_1,k_2)]-\psi[[a,k_1], k_2]-\psi[k_1,[a, k_2]]
\end{split}
\end{equation*}
for all $k_1, k_2\in\Gamma(K)$.
By (3) in Proposition \ref{prop_K}, $\nabla_aB=0$ for all $a\in\Gamma(A)$, i.e.~$B$ is flat. But by Lemma \ref{lemma_flat_sections}, $\nabla_aB=0$ just means that  the flow $\Phi$ of $\widehat{\nabla_a}\in\mx^l(K^*\otimes K^*\otimes K)$ does 
\[ \Phi_t^\star B=B
\]
on $M_t$ for all $t\in \R$. 
By the considerations in Section \ref{constr_conn}, for $t\in\R$ the section $\Phi_t^\star B$ of $K^*\otimes K^*\otimes K$ over $M_t$ is defined by
\[ (\Phi_t^\star B)\an{x}(k_1,k_2)=(\Phi_{-t}(B\an{\phi_t(x)}))(k_1,k_2)=\Psi_{-t}(B\an{\phi_t(x)}(\Psi_tk_1,\Psi_tk_2))
\]
for $x\in M_t$ and $k_1,k_2\in K\an{x}$, where the map $\Psi$ on the right-hand side is the flow of $\widehat{\nabla_a}\in\mx^l(K)$. Since this is $B\an{x}(k_1,k_2)$, 
\[ [\Psi_t k_1,\Psi_tk_2]=B\an{\phi_t(x)}(\Psi_t k_1, \Psi_tk_2)=\Psi_t(B\an{x}(k_1,k_2))=\Psi_t[k_1,k_2]
\] 
for all $x\in M_t$ and all $k_1,k_2\in K\an{x}$.

\bigskip

Let $x\in M$ and choose a smooth chart $\varphi\colon U^\varphi\to \varphi(U^\varphi)$ of $M$ centered at $x$ and such that $\varphi(U^\varphi)\subseteq \R^m$ is a cube centered at $0=\varphi(x)$. Then the smooth vector fields $\partial_i^\varphi\in \mx(U^\varphi)$ all commute for $i=1,\ldots, m$, where $m$ is the dimension of $M$. Since $\rho$ is transitive there exist sections $a_1,\ldots, a_m$ of $A$ over $U^\varphi$, such that $\rho\circ a_i=\partial_i^\varphi$ for $i=1,\ldots,m$. Consider the linear vector fields
\[ \widehat{\nabla_{a_i}}\in\mx^l\left(q_K\inv U^\varphi\right), \qquad i=1,\ldots,n.
\]
Each $y\in U^\varphi$ can be written 
\[ y=\varphi\inv(y_1e_1+\ldots+y_me_m)
\]
where $y_i=\varphi_i(y)$ for $i=1,\ldots,m$ and $(e_1,\ldots, e_m)$ is the canonical basis of $\R^m$.
That is, 
\[y=\phi^{\partial_{m}^\varphi}_{y_m}\circ \ldots \circ \phi^{\partial_{1}^\varphi}_{y_1}(x),
\]
where for $i=1,\ldots,m$ and $t\in\R$ where defined, the map $\phi^{\partial_{i}^\varphi}_{t}\colon U^\varphi\to U^\varphi$ is the flow 
\[ y\mapsto \varphi\inv(\varphi(y)+te_i)
\]
of the vector field $\partial_i^\varphi$.
The fiber of $K$ over $y$ is then isomorphic to $K_x$ via the composition of flows
\[ \Phi^{\widehat{\nabla_{a_m}}}_{y_m}\circ \ldots \circ \Phi^{\widehat{\nabla_{a_1}}}_{y_1}\colon K_x\to K_y.
\]
The map
\[ \Theta\colon q_K\inv(U^\varphi)\to U^\varphi\times K_x, \qquad k_y\mapsto \left(y,\phi^{\partial_{1}^\varphi}_{-y_1}\circ \ldots \circ \phi^{\partial_{m}^\varphi}_{-y_m}(k_y)\right)
\]
for $y\in U^\varphi$ with coordinates $\varphi^1(y)=y_1, \ldots, \varphi^m(y)=y_m$, is easily seen to be a diffeomorphism 
that restricts to an isomorphism of Lie algebras in each fiber. This completes the proof of the fact that $K$ is a Lie algebra bundle.
\end{proof}

\def\cprime{$'$} \def\polhk#1{\setbox0=\hbox{#1}{\ooalign{\hidewidth
  \lower1.5ex\hbox{`}\hidewidth\crcr\unhbox0}}} \def\cprime{$'$}
  \def\cprime{$'$}


\begin{thebibliography}{Lam98}

\bibitem[CF03]{CrFe03}
M.~Crainic and R.~L. Fernandes.
\newblock Integrability of {L}ie brackets.
\newblock {\em Ann. of Math. (2)}, 157(2):575--620, 2003.

\bibitem[JM24]{JoMa24}
M.~Jotz and R.~Marchesini.
\newblock On the homotopy invariance of the twisted lie algebroid cohomology,
  2024.

\bibitem[Lam98]{Lam98}
C.~S. Lam.
\newblock Decomposition of time-ordered products and path-ordered exponentials.
\newblock {\em J. Math. Phys.}, 39(10):5543--5558, 1998.

\bibitem[Lan99]{Lang99}
S.~Lang.
\newblock {\em Fundamentals of differential geometry}, volume 191 of {\em
  Graduate Texts in Mathematics}.
\newblock Springer-Verlag, New York, 1999.

\bibitem[Lee13]{Lee13}
J.~M. Lee.
\newblock {\em Introduction to smooth manifolds}, volume 218 of {\em Graduate
  Texts in Mathematics}.
\newblock Springer, New York, second edition, 2013.

\bibitem[Lee18]{Lee18}
J.~M. Lee.
\newblock {\em Introduction to {R}iemannian manifolds}, volume 176 of {\em
  Graduate Texts in Mathematics}.
\newblock Springer, Cham, 2018.
\newblock Second edition of [ MR1468735].

\bibitem[Mac05]{Mackenzie05}
K.~C.~H. Mackenzie.
\newblock {\em General {T}heory of {L}ie {G}roupoids and {L}ie {A}lgebroids},
  volume 213 of {\em London Mathematical Society Lecture Note Series}.
\newblock Cambridge University Press, Cambridge, 2005.

\bibitem[Mei21]{Meinrenken21}
E.~Meinrenken.
\newblock On the integration of transitive {Lie} algebroids.
\newblock {\em Enseign. Math. (2)}, 67(3-4):423--454, 2021.

\bibitem[Pra77]{Pradines77}
J.~Pradines.
\newblock {\em Fibr\'es vectoriels doubles et calcul des jets non holonomes},
  volume~29 of {\em Esquisses Math\'ematiques}.
\newblock Universit\'e d'Amiens U.E.R. de Math\'ematiques, Amiens, 1977.

\end{thebibliography}
\end{document}